\title{Smooth plug-in inverse estimators in the current status continuous mark model.}
\author{P. Groeneboom$^{\rm a}$,\ G. Jongbloed$^{\rm a}$ \& B.I. Witte$^{\rm b}$ \\ {$^{\rm a}$\it Delft University of Technology; $^{\rm b}$VU University Medical Center}}
\def\A{\ensuremath{\mathcal{A}}}
\def\B{\ensuremath{\mathcal{B}}}
\def\E{\ensuremath{{\mbox{E}}}}
\def\M{\ensuremath{\mathcal M}}
\def\N{\ensuremath{\mathcal N}}
\def\cP{\ensuremath{\mathcal P}}
\def\R{\ensuremath{{\mbox{I\!R}}}}
\def\cS{\ensuremath{\mathcal S}}
\def\U{\ensuremath{\mathcal U}}
\def\conv{\ensuremath{\mathop{\rm \longrightarrow}}}
\newcommand{\an}{\ensuremath{\alpha_n}}
\newcommand{\bn}{\ensuremath{\beta_n}}
\newcommand{\Fpi}[1]{\ensuremath{\hat F_n^{(#1)}}}
\newcommand{\fpi}[1]{\ensuremath{\hat f_n^{(#1)}}}
\newcommand{\FMS}{\ensuremath{\hat F_n^{MS}}}
\newcommand{\gh}[1]{\ensuremath{\hat g_{#1}}}
\newcommand{\hf}{\ensuremath{h_{f_0}}}
\newcommand{\Hn}[1]{\ensuremath{{\rm I\!H}_{#1}}}
\newcommand{\cov}{\ensuremath{{\mbox{Cov}}}}
\newcommand{\Dconv}{\ensuremath{\leadsto}}
\newcommand{\norm}[2]{\mbox{$\|#1\|_{#2}$}}
\newcommand{\Pconv}{\ensuremath{\stackrel{\cP}{\conv}}}
\newcommand{\Var}{\ensuremath{{\mbox{Var}}}}
\newcommand{\eps}{\ensuremath{\varepsilon}}
\newcommand{\be}{\begin{eqnarray*}}
\newcommand{\ee}{\end{eqnarray*}}
\newcommand{\bef}{\begin{eqnarray}}
\newcommand{\eef}{\end{eqnarray}}
\renewcommand{\O}{\ensuremath{{\cal O}}}
\renewcommand{\o}{{\mbox{\scriptsize $\O$}}}
\newtheorem{theorem}{Theorem}
\newtheorem{lemma}[theorem]{Lemma}
\newtheorem{corollary}[theorem]{Corollary}
\newenvironment{proof}{\noindent\textit{Proof:}}{$\hfill\square$\vspace{3mm}}
\newenvironment{proof2}[1]{\noindent\textit{Proof of #1:}}{$\hfill\square$\vspace{3mm}}
\newenvironment{remark}{\noindent\textit{Remark.}}{\vspace{3mm}}
\begin{document}
\maketitle

\noindent ABSTRACT. We consider the problem of estimating the joint distribution function of the event time and a continuous mark variable when the event time is subject to interval censoring case 1 and the continuous mark variable is only observed in case the event occurred before the time of inspection. The nonparametric maximum likelihood estimator in this model is known to be inconsistent. We study two alternative smooth estimators, based on the explicit (inverse) expression of the distribution function of interest in terms of the density of the observable vector. We derive the pointwise asymptotic distribution of both estimators.

\vspace{5mm}
\noindent {\it Key words}: asymptotic distribution, bivariate kernel estimation, continuous mark variable, consistency, current status data, plug-in estimation

\section{Introduction}
To test the efficacy of a vaccine, preventative trials are held where participants are injected with the vaccine and tested for several times. One of the questions of interest in the trials is whether the efficacy depends on the genetic sequence of the exposing virus. To answer this question, \shortciteN{rgp120:05} studied the so-called viral distance between the HIV sequence represented in the vaccine and the HIV sequence the participant is infected with. This distance can be considered as a ``mark'' variable, since it can only be observed if infection has already taken place. This variable is possibly correlated with the time of HIV infection and according to \shortciteN{gilbert:01} it is natural to treat it as a continuous random variable.

A natural statistical model to describe the observations in these HIV vaccine trials is the interval censored continuous mark model, which was first studied by \shortciteN{hudgens:07}. In this model, $X$ is an event time (the time of HIV infection) and $Y$ is a continuous mark variable (the viral distance) and we are interested in the bivariate distribution function $F_0$ of the pair $(X,Y)$. However, the event time is subject to interval censoring case $k$. We restrict ourselves to the special instance of interval censoring case 1 (also known as current status censoring) and refer to this model as the current status continuous mark model.

For this model, the method of nonparametric maximum likelihood estimation is studied \citeN{maathuis:08}. There it is proved that the maximum likelihood estimator (MLE) is inconsistent. An approach they propose to `repair' the inconsistency is by discretizing the mark variable. Discretizing the mark variable to $K$ levels, the resulting observations can be viewed as observations from the current status $K$-competing risk model. The characterization, consistency and (local) asymptotic distribution theory of the MLE in that model follow from \shortciteANP{groeneboom:08} \citeyear{groeneboom:08,groeneboom:08b}. Results on consistency and asymptotics as $K\to\infty$ are not yet known.

Another natural way to estimate the distribution function $F_0$ is by viewing this problem as an inverse statistical model. In inverse models, like interval censoring models or deconvolution models, one is interested in estimating the distribution of a random variable $X$. Instead of observing this variable $X$ directly, only a related variable $W$ is observed. The distribution of $W$ depends on the distribution function $F_0$ of $X$ (or its Lebesgue density $f_0$) via a known (direct) relation. In some cases, this relation can be explicitly inverted to express $F_0$ in terms of the distribution of $W$, and to estimate $F_0$ one can plug in an estimator for the distribution of $W$ in this inverse relation. The resulting estimator is called a plug-in inverse estimator. Plug-in inverse estimators are studied by \citeN{hall:88} in Wicksell's corpuscle problem, by \citeN{stefanski:90} in the deconvolution model and by \citeN{burke:88} in the bivariate right-censoring model.

In this paper we study plug-in inverse estimators in the current status continuous mark model. We start with a formal description of the model and define two plug-in inverse estimators in Section \ref{sec:def_est}. One estimator is based on univariate kernel smoothing, the other is based on bivariate kernel smoothing. In Section \ref{sec:cons_Fpi2}, we prove that these estimators are uniformly consistent for $F_0$. Unfortunately, these estimators are not monotonically increasing in both directions, which is a necessary property of bivariate distribution functions. In Section \ref{sec:cons_Fpi2} we prove that the estimator based on bivariate kernel smoothing asymptotically will have all properties of a bivariate distribution function on a large subset of $[0,\infty)^2$. The plug-in inverse estimator resulting from the univariate kernel smoothing estimator is computationally and asymptotically more tractable. In Section \ref{sec:asymp_Fpi1_Fpi2}, we first derive the asymptotic distribution of this estimator. After that, we prove that for certain choices of the smoothing parameter in the $z$-direction, the two plug-in inverse estimators are asymptotically equivalent, while for other choices the asymptotic biases differ but the asymptotic variances are equal. This phenomenon was also observed by \citeN{marron:87} and \shortciteN{patil:94} in the case of estimating densities based on right-censored data and by \shortciteN{witte:10} in the current status model. The asymptotic distribution of the estimator based on bivariate kernel smoothing then follows easily. In Section \ref{sec:smooth_functionals}, we briefly address the problem of estimating smooth functionals. A small simulation study to compare the estimators with the binned MLE studied by \citeN{maathuis:08} and the maximum smoothed likelihood estimator studied by \shortciteN{witte:10} is performed in Section \ref{sec:sim_study}. Technical proofs and lemmas can be found in the Appendix.

\section{Definition of the estimators}\label{sec:def_est}
In this section we describe the current status continuous mark model in more detail and define two plug-in inverse estimators based on kernel smoothing.

Let $X$ be an event time, $Y$ a continuous mark variable and $F_0$ be the distribution function of the pair $(X,Y)$. In the current status continuous mark model, instead of observing the pair $(X,Y)$, we observe a censoring variable $T$, independent of $(X,Y)$ with Lebesgue density $g$, as well as the indicator variable $\Delta=1_{\{X\leq T\}}$. In case $X\leq T$, i.e.\ if $\Delta=1$, we also observe the mark variable $Y$; in case $\Delta=0$ the variable $Y$ is not observed. Under the assumption that $P(Y=0)=0$, we can represent the observable information on $(X,Y)$ in the vector $W=(T,Z,\Delta)$, for $Z = \Delta \cdot Y$.

Let $\lambda_i$ be Lebesgue-measure on \R$^i$, $\B$ the Borel $\sigma$-algebra on $[0,\infty)^2$ and define the measure $\lambda$ on \B\ by
\be\lambda\big(B\big) = \lambda_2\big(B\big) + \lambda_1\big(\{x\in[0,\infty):(x,0)\in B\}\big), \ B\in\B.\ee
Then, the density of the observable vector $W$ w.r.t.\ the product of this measure with counting measure on $\{0,1\}$ can be written as
\bef\label{def:obs_dens}
h_{F_0}(t,z,\delta) = \delta g(t)\partial_2 F_0(t,z) + (1-\delta)g(t)\big(1-F_{0,X}(t)\big) = \delta h_1(t,z) + (1-\delta)h_0(t),
\eef
where $F_{0,X}$ is the marginal distribution of $X$ and $\partial_2 F_0(t,z) = \frac{\partial}{\partial z} F_0(t,z)$. More generally, for convenience of notation, we denote the $j$th partial derivative with respect to $x_i$ of a function $F$ by $\partial_i^j F$, i.e.
\be \partial_i^j F(x_1,x_2) = \frac{\partial^j}{\partial y_i^j} F(y_1,y_2)\Big|_{(y_1,y_2)=(x_1,x_2)},\ee
and omit $j$ when $j=1$.

Based on the relation $h_1(t,z) = g(t)\partial_2 F_0(t,z)$, we can express the bivariate distribution function $F_0$ of $(X,Y)$ in terms of the (sub-)densities $g$ and $h_1$
\bef\label{def:inv_rel_cscm}
F_0(t,z) = \frac{1}{g(t)}\int_0^zh_1(t,v)\,dv.
\eef
Then, our plug-in inverse estimator in the current status continuous mark model is defined as
\be \hat F(t,z) = \frac{1}{\hat g(t)}\int_0^z\hat h_1(t,v)\,dv,\ee
where $\hat g$ and $\hat h_1$ are estimators for $g$ and $h_1$, respectively.

Before explicitly choosing the estimators $\hat g$ and $\hat h_1$, we introduce some notation. Throughout the paper $k$ denotes a univariate kernel density, $\tilde k$ a bivariate kernel density and $(\an)$ and $(\bn)$ vanishing sequences of positive smoothing parameters. Let $k_{\an}$ and $\tilde k_{\an,\bn}$ the rescaled versions of $k$ and $\tilde k$, i.e., $k_{\an}(u) = \an^{-1}k(u/\an)$ and $\tilde k_{\an,\bn}(u,v) = \an^{-1}\bn^{-1}\tilde k(u/\an,v/\bn)$. Furthermore, we define
\be m_2(k) = \int u^2 k(u)\,du, \ m_2(\tilde k) = \iint w_1^2 \tilde k(w_1,w_2)\,dw_1\,dw_2.\ee

Then for fixed $t_0$ and $z_0$, we estimate $g$ and $h_1$ by their respective univariate and bivariate kernel (sub-)density estimators
\be\gh{n}(t_0) = \frac1n\sum_{i=1}^n k_{\an} (t_0-T_i), \ \hat h_{n,1}^{(2)}(t_0,z_0) = \frac1n\sum_{i=1}^n \Delta_i\tilde k_{\an,\bn} (t_0-T_i,z_0-Z_i).\ee
The plug-in inverse estimator then becomes
\bef\label{def:Fn_two}
\Fpi{2}(t_0,z_0) = \frac{\int_0^{z_0}\frac1n \sum_{i=1}^n \Delta_i\tilde k_{\an,\bn} (t_0-T_i,z-Z_i)\,dz}{\frac1n\sum_{i=1}^n k_{\an}(t_0-T_i)}.
\eef
Here, superscript 2 in the notation for the plug-in estimator refers to the fact that there is smoothing in two directions.

In Section \ref{sec:asymp_Fpi1_Fpi2} we also consider a less natural, but computationally and asymptotically more tractable estimator using an estimate for the numerator $\int_0^{z_0}h_1(t_0,z)\,dz$ based on smoothing only in the $t$-direction, i.e., when we estimate it by
\be \frac1n \sum_{i=1}^n 1_{[0,z_0]}(Z_i)\Delta_i k_{\an} (t_0-T_i).\ee
The plug-in inverse estimator then becomes
\bef\label{def:Fn_one}
\Fpi{1}(t_0,z_0) = \frac{\frac1n \sum_{i=1}^n 1_{[0,z_0]}(Z_i) \Delta_i k_{\an} (t_0-T_i)}{\frac1n\sum_{i=1}^n k_{\an}(t_0-T_i)}.
\eef
Superscript 1 in the notation for this estimator refers to the fact that there is only smoothing in one direction. Note that if we take $k(y) = \frac12 1_{[-1,1]}(y)$, (\ref{def:Fn_one}) results in
\be \Fpi{1}(t_0,z_0) = \frac{\int_{u\in A_n}\int_{z\leq z_0}\delta\,d\Hn{n}(u,z,\delta)}{\int_{u\in A_n}\int_{z\geq0}\,d\Hn{n}(u,z,\delta)},\ee
where \Hn{n}\ is the empirical distribution of the observations $(T_1,Z_1,\Delta_1),\ldots,(T_n,Z_n,\Delta_n)$ and $A_n=A_n(t_0)=[t_0-\an,t_0+\an]$. This estimator is the total number of observations $T_i$ in $A_n$ with $Z$-value smaller than or equal to $z_0$ and $\Delta=1$ divided by the total number of observations $(T_i,Z_i)$ in the strip $A_n\times [0,\infty)$.

It is very natural to define the kernel density $k$ in terms of the kernel density $\tilde k$ as stated in assumption $(K.1)$:
\begin{itemize}
\item[$(K.1)$] Let $\tilde k$ be a bivariate kernel density, then the kernel density $k$ is defined as
\be k(w_1)=\int \tilde k(w_1,w_2)\,dw_2.\ee
\end{itemize}
Indeed, if $(K.1)$ holds the estimator \Fpi{2}\ also satisfies the inverse relation $h_0(t) = g(t)\big(1-F_{0,X}(t)\big)$ that follows from substituting $\delta=0$ in (\ref{def:obs_dens}). To see this, note that we have that
\be \hat g_n(t_0) &=& \frac1n\sum_{i=1}^n k_{\an}(t_0-T_i) = \frac1n\sum_{i=1}^n (1-\Delta_i) k_{\an}(t_0-T_i) + \frac1n\sum_{i=1}^n \Delta_i k_{\an}(t_0-T_i)\\
&=& \frac1n\sum_{i=1}^n (1-\Delta_i) k_{\an}(t_0-T_i) + \int\frac1n\sum_{i=1}^n \Delta_i \tilde k_{\an,\bn}(t_0-T_i,z-Z_i)\,dz. \ee
If we define $\hat h_{n,0}(t_0) = \frac1n\sum_{i=1}^n (1-\Delta_i) k_{\an}(t_0-T_i)$ as an estimator for the sub-density $h_0$ in (\ref{def:obs_dens}), then 
\be 1-\hat F_{n,X}^{(2)}(t_0) = 1-\Fpi{2}(t_0,\infty) = 1- \frac{\int_0^{\infty}\hat h_{n,1}(t_0,z)\,dz}{\hat g_n(t_0)} = 1-\frac{\hat g_n(t_0) - \hat h_{n,0}(t_0)}{\hat g_n(t_0)} = \frac{\hat h_{n,0}(t_0)}{\hat g_n(t_0)}.\ee

Figure \ref{fig:ill_Fn1} illustrates the estimator \Fpi{1} for $n=10$ and $n=100$. For $F_0$ we took the uniform distribution on $[0,1]^2$ and for $g$ the uniform distribution on $[0,1]$. As kernel density we used $k(y) = \frac121_{[-1,1]}(y)$. The smoothing parameter \an\ is taken to be $0.65$ for $n=10$ and $0.40$ for $n=100$. These values are chosen for illustrative purpose only and do not depend on the data. In Section \ref{sec:sboot} we briefly address the problem of choosing \an\ and \bn\ depending on the data.

\begin{center}
{\bf [Figure \ref{fig:ill_Fn1} here]}
\end{center}
Note that these estimators are not true bivariate distribution functions, as they decrease locally in the $x$-direction. Monotonicity of a bivariate function is a necessary (but not sufficient) condition in order to be a bivariate distribution function, hence these estimators can be seen as naive estimators. The estimator \Fpi{2}\ can also have this undesirable naive behavior.

\section{Consistency and monotonicity}\label{sec:cons_Fpi2}
In this section we prove that the estimators \Fpi{1} and \Fpi{2} are uniformly consistent. Furthermore, we prove that for appropriate choices of the bandwidths and $n$ sufficiently large, \Fpi{2}\ will have all properties of a bivariate distribution function on a large subset of $[0,\infty)^2$, with arbitrarily high probability. To derive these results for \Fpi{2}, we assume the distribution function of interest $F_0$ and the censoring density $g$ satisfy the following conditions.
\begin{itemize}
\item[$(F.1)$] The Lebesgue density $f_0$ of $F_0$ exists for all $(t,z)\in[0,\infty)^2$.
\item[$(G.1)$] Let $\cS_{0,X}^{\circ}$ denote the interior of the support of the marginal density $f_{0,X}$ of $X$. On $\cS_{0,X}^{\circ}$, the density $g$ satisfies $0< g<\infty$ and its derivative $g'$ is uniformly continuous and bounded.
\end{itemize}

We also impose some conditions on the kernel densities $k$ and $\tilde k$, as well as a condition on the smoothing parameters \an\ and \bn.
\begin{itemize}
\item[$(K.2)$] The kernel density $k$ has compact support $[-1,1]$, is continuous and symmetric around 0.
\item[$(K.3)$] The kernel density $\tilde k$ has compact support $[-1,1]^2$, is continuous and satisfies
\be \iint w_i\tilde k(w_1,w_2)\,dw_1\,dw_2 = 0 \ (i=1,2), \ \iint w_2^2\tilde k(w_1,w_2)\,dw_1\,dw_2 = \iint w_1^2\tilde k(w_1,w_2)\,dw_1\,dw_2.\ee
\item[$(C.1)$] The positive smoothing parameters \an\ and \bn\ satisfy
\be \lim_{n\to\infty} \an = \lim_{n\to\infty} \bn = 0,\ \lim_{n\to\infty} n\an = \infty.\ee
\end{itemize}
A possible choice for the bivariate kernel density $\tilde k$ is the product kernel density $\tilde k(x,y) = k_1(x)k_2(y)$ for univariate kernel densities $k_1$ and $k_2$ with compact support $[-1,1]$ that are continuous and symmetric around 0. This kernel density $\tilde k$ satisfies condition $(K.1)$ for $k=k_1$ and $(K.3)$ if $m_2(k_1)=m_2(k_2)$.

\begin{theorem}
Assume $F_0$ and $g$ satisfy conditions $(F.1)$ and $(G.1)$. Also assume $k$ is defined via relation $(K.1)$ and satisfies condition $(K.2)$. Furthermore, let \an\ and \bn\ satisfy condition $(C.1)$. Let $\A\subset\R_+^2$ be a compact set such that $g(t)\geq c>0$ for all $(t,z)\in\A$
Then \Fpi{1}\ and \Fpi{2} are uniformly consistent on $\A$.
\end{theorem}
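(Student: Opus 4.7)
The plan is to reduce uniform consistency to separate uniform convergence statements for the numerator and denominator of each estimator. Writing $\Fpi{j}(t_0,z_0) = N_n^{(j)}(t_0,z_0)/\gh{n}(t_0)$ for the respective numerators, and using (\ref{def:inv_rel_cscm}) to write $F_0(t_0,z_0) = H_1(t_0,z_0)/g(t_0)$ with $H_1(t,z) = \int_0^z h_1(t,v)\,dv = g(t)F_0(t,z)$, I would exploit the algebraic identity
\[
\Fpi{j}(t_0,z_0) - F_0(t_0,z_0) \;=\; \frac{N_n^{(j)}(t_0,z_0) - H_1(t_0,z_0)}{\gh{n}(t_0)} \;-\; F_0(t_0,z_0)\cdot\frac{\gh{n}(t_0) - g(t_0)}{\gh{n}(t_0)}.
\]
Since $F_0\leq 1$ and $g\geq c>0$ on $\A$, on the event $\{\sup_{t\in\A_1}|\gh{n}(t)-g(t)|<c/2\}$, with $\A_1=\{t:(t,z)\in\A\text{ for some }z\}$, we have $\gh{n}\geq c/2$ on $\A_1$, and the right-hand side is uniformly bounded on $\A$ by $2c^{-1}(|N_n^{(j)}-H_1|+|\gh{n}-g|)$. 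Thus it suffices to prove uniform convergence in probability of $\gh{n}$ to $g$ on $\A_1$ and of $N_n^{(j)}$ to $H_1$ on $\A$.

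For the denominator I would use the standard bias-variance decomposition. The bias $\E\gh{n}(t)-g(t) = \int k(u)\{g(t-\an u)-g(t)\}\,du$ vanishes uniformly on $\A_1$ by uniform continuity of $g$ on compacts (from $(G.1)$) together with $\an\to 0$ from $(C.1)$. The stochastic term $\gh{n}-\E\gh{n}$ has variance $O((n\an)^{-1})\to 0$; the passage from pointwise to uniform control on $\A_1$ follows from a standard maximal inequality applied to the uniformly bounded VC class $\{k_{\an}(t-\cdot):t\in\A_1\}$, with boundedness ensured by $(K.2)$.

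For the numerator of $\Fpi{1}$, a change of variables gives $\E N_n^{(1)}(t_0,z_0) = \int k(u)H_1(t_0-\an u,z_0)\,du$, converging uniformly on $\A$ to $H_1(t_0,z_0)$ by uniform continuity of $(t,z)\mapsto H_1(t,z)=g(t)F_0(t,z)$ on compacts (from $(F.1)$ and $(G.1)$). For $\Fpi{2}$, interchanging the $z$-integral with the kernel integrals and substituting $s=z-\bn w$, together with $h_1(t,s)=0$ for $s<0$, yields
\[
\E N_n^{(2)}(t_0,z_0) = \iint \tilde k(u,w)\int_0^{z_0-\bn w} h_1(t_0-\an u, s)\,ds\,du\,dw,
\]
which converges uniformly on $\A$ to $H_1(t_0,z_0)$ by the same continuity argument, up to an $O(\bn)$ boundary strip near $z_0=0$. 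The stochastic terms of both numerators are handled by the same empirical-process argument as the denominator, with uniformity in $z_0$ obtained from monotonicity of $z_0\mapsto N_n^{(j)}(t_0,z_0)$ via a Glivenko--Cantelli-style discretization of the $z$-projection of $\A$.

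The main obstacle I expect is promoting the pointwise variance bound $(n\an)^{-1}$ to a uniform statement while the bandwidths shrink with $n$: a simple Chebyshev bound is insufficient, and one must couple the bias analysis with a maximal inequality or bracketing argument on a suitable (but standard) VC class of kernel slices. The bookkeeping for $\Fpi{2}$ near the $z_0=0$ boundary adds a minor technical wrinkle, but it is confined to a strip of width $O(\bn)$ and is asymptotically negligible.
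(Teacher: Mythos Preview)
Your approach is correct and differs from the paper's in two respects. For $\Fpi{1}$ the paper simply invokes Theorem~3.2 of H\"ardle, Janssen and Serfling (1988) on uniform consistency of kernel estimators of conditional functionals, recognizing $\Fpi{1}(\cdot,z_0)$ as a Nadaraya--Watson regressor of $1_{[0,z_0]}(Z)\Delta$ on $T$; you instead work from scratch via bias--variance plus an empirical-process maximal inequality, which is equivalent but self-contained. More substantively, for $\Fpi{2}$ the paper first establishes uniform consistency of the bivariate sub-density estimator $\hat h_{n,1}^{(2)}$ itself (via Cacoullos, referenced through Lemma~\ref{lem:unif_cons_hn2_gn}) and only then integrates in $z$ to control the numerator, whereas you bypass the density level entirely and work directly with the integrated quantity $N_n^{(2)}$. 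Your route has the structural advantage that the summands defining $N_n^{(2)}$ are of size $O(\an^{-1})$ rather than $O(\an^{-1}\bn^{-1})$, so the variance is $O((n\an)^{-1})$ and the argument runs under $(C.1)$ alone; the paper's detour through $\hat h_{n,1}^{(2)}$ implicitly leans on a condition of the type $n\an\bn\to\infty$ that $(C.1)$ does not supply. Your ratio decomposition is a harmless algebraic variant of the one the paper uses. One small inaccuracy: the lower limit $0$ in your displayed formula for $\E N_n^{(2)}$ is only correct when $w\geq 0$; for $w<0$ the correct lower limit after the substitution is $-\bn w>0$, but the discrepancy is exactly the $O(\bn)$ boundary contribution you already flag.
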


\begin{proof}
The uniform consistency of \Fpi{1}\ follows from Theorem 3.2 in \shortciteN{haerdle:88}.

To prove that \Fpi{2}\ is uniformly consistent on \A, first note that for $n$ sufficiently large there exists $\eps>0$ such that
\be\sup_{(t,z)\in\A} \big|\hat h_{n,1}^{(2)}(t,z) - h_1(t,z)\big| \leq \eps,\ee
see also Lemma \ref{lem:unif_cons_hn2_gn}. Hence
\be \left|\int_0^z\hat h_{n,1}^{(2)}(t,y)\,dy - \int_0^zh_1(t,y)\,dy\right| \leq \int_0^z\big|\hat h_{n,1}^{(2)}(t,y)-h_1(t,y)\big|\,dy \leq \eps z.\ee
Since $z\in\A$ and $\A$ is compact, this implies that
\bef\label{unif_conv:num_Fpi2}
\sup_{(t,z)\in\A}\left|\int_0^z\hat h_{n,1}^{(2)}(t,y)\,dy - \int_0^zh_1(t,y)\,dy\right| \Pconv 0.
\eef

Write $N_n^{(2)}(t,z) = \int_0^z\hat h_{n,1}^{(2)}(t,y)\,dy$ and $N(t,z)=\int_0^z h_1(t,y)\,dy$. Then we have that
\be \big|\Fpi{2}(t,z)-F_0(t,z)\big| &=& \left|\frac{N_n^{(2)}(t,z)}{\gh{n}(t)} - \frac{N(t,z)}{g(t)}\right| \\
&\leq& \left|\frac{N_n^{(2)}(t,z)-N(t,z)}{g(t)}\right| + \left|\frac{N_n^{(2)}(t,z)}{\gh{n}(t)}-\frac{N_n^{(2)}(t,z)}{g(t)}\right|\\
&=& \frac{1}{g(t)}\big|N_n^{(2)}(t,z)-N(t,z)\big| + N_n^{(2)}(t,z)\left|\frac{1}{\gh{n}(t)}-\frac{1}{g(t)}\right|.\ee
The first term converges uniformly to zero in probability over \A\ by (\ref{unif_conv:num_Fpi2}). The second term converges uniformly to zero in probability by Lemma \ref{lem:unif_cons_hn2_gn}, and uniform consistency of \Fpi{2}\ follows.\\ \mbox{}
\end{proof}

Each bivariate distribution function $F$ has to satisfy
\bef\label{cond:biv_cdf}
\forall\, x_1<x_2, y_1<y_2\ :\ F(x_2,y_2) - F(x_2,y_1) - F(x_1,y_2) + F(x_1,y_1) \geq 0.
\eef
This condition requires that each rectangle $[x_1,x_2]\times[y_1,y_2]$ has a nonnegative mass and suggests that some shape constraints on $F_0$ are imposed by the model. However, in Theorem \ref{th:pos_dens} below, we prove that it is not necessary to use this shape constraint to estimate $F_0$ since the estimator \Fpi{2}\ satisfies condition (\ref{cond:biv_cdf}) asymptotically. To prove this, we prove that the Lebesgue density \fpi{2}\ is positive, with probability converging to one. The estimator \Fpi{1}\ does not have a density w.r.t.\ Lebesgue measure $\lambda_2$, hence a similar result can not be proved in this way for \Fpi{1}. To prove Theorem \ref{th:pos_dens}, we need stronger conditions on \an\ and \bn\ than condition $(C.1)$.
\begin{itemize}
\item[$(C.2)$] The smoothing parameters \an\ and \bn\ converge to zero as $n\to\infty$ and satisfy
\be \lim_{n\to\infty} n\an^2\bn^2 = \infty, \ \ \lim_{n\to\infty} n\an^3\bn = \infty.\ee
\end{itemize}
Note that sequences \an\ and \bn\ satisfying condition $(C.2)$ also satisfy condition $(C.1)$ and $n\an^3\to\infty$.

\begin{theorem}\label{th:pos_dens}
Assume $F_0$ and $g$ satisfy conditions $(F.1)$ and $(G.1)$. Also assume $k$ and $\tilde k$ satisfy conditions $(K.2)$ and $(K.3)$. In addition, assume $k'$ and $\partial_1\tilde k$ are uniformly continuous. Furthermore, let \an\ and \bn\ satisfy condition $(C.2)$. Let $\cS\subset[0,\infty)^2$ be compact and such that $f_0$ is uniformly continuous on an open subset that contains \cS\ and for all $\delta>0$, $\M_\delta = \big\{(t,z)\in[0,\infty)^2 : f_0(t,z) \geq 2\delta\big\}\cap\cS$. Then for $\delta>0$,
\bef\label{pos_dens_fn2}
P\left(\forall\ (t,z)\in\M_\delta\ :\ \fpi{2}(t,z) > \frac{l_g^2}{2u_g^2}\delta \right) \conv 1,
\eef
where \fpi{2} is the Lebesgue density of \Fpi{2} and $l_g$ and $u_g$ are as defined in Lemma \ref{lem:bounds_g}.
\end{theorem}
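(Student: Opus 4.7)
The plan is to compute $\fpi{2}$ as the mixed partial $\partial_1\partial_2\Fpi{2}$, prove that it converges uniformly in probability to $f_0$ on $\cS$, and then combine this with the uniform control on $\gh{n}$ from Lemma \ref{lem:bounds_g} to obtain the lower bound on $\M_\delta$.

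Since $\Fpi{2}(t,z)=\int_0^z\hat h_{n,1}^{(2)}(t,y)\,dy/\gh{n}(t)$, a direct differentiation gives
\[
\fpi{2}(t,z)=\frac{\partial_1\hat h_{n,1}^{(2)}(t,z)\,\gh{n}(t)-\hat h_{n,1}^{(2)}(t,z)\,\gh{n}'(t)}{\gh{n}(t)^2}.
\]
Using $h_1(t,z)=g(t)\partial_2 F_0(t,z)$, I have $\partial_1 h_1(t,z)=g'(t)\partial_2 F_0(t,z)+g(t)f_0(t,z)$, which yields the key algebraic identity $\partial_1 h_1(t,z)g(t)-h_1(t,z)g'(t)=g(t)^2 f_0(t,z)$. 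Thus the population counterpart of the ratio defining $\fpi{2}$ is exactly $f_0(t,z)$, which is where the target bound $2\delta$ on $\M_\delta$ will enter.

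Next, I would establish uniform-in-$(t,z)$ consistency on $\cS$ for each of the four objects in the display. Uniform consistency of $\hat h_{n,1}^{(2)}$ and $\gh{n}$ is provided by (or is analogous to) Lemma \ref{lem:unif_cons_hn2_gn}. The new ingredients are $\partial_1\hat h_{n,1}^{(2)}$ and $\gh{n}'$: their biases vanish by the standard kernel-smoothing computation, invoking uniform continuity of $\partial_1\tilde k$, $k'$, $f_0$ and $g'$; their variances are of orders $\O(1/(n\an^3\bn))$ and $\O(1/(n\an^3))$ respectively, both of which tend to zero by condition $(C.2)$ (using the remark that $(C.2)$ implies $n\an^3\to\infty$). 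Pointwise convergence is then upgraded to uniform convergence on the compact set $\cS$ by a standard covering/equicontinuity argument that uses the uniform continuity of $\partial_1\tilde k$ and $k'$.

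Combining the four uniform limits gives
\[
\sup_{(t,z)\in\cS}\bigl|\gh{n}(t)^2\fpi{2}(t,z)-g(t)^2 f_0(t,z)\bigr|\Pconv 0.
\]
On $\M_\delta$ one has $f_0(t,z)\geq 2\delta$, hence $g(t)^2 f_0(t,z)\geq 2 l_g^2\delta$, while Lemma \ref{lem:bounds_g} simultaneously provides $\gh{n}(t)^2\leq u_g^2$ uniformly on $\cS$ with probability tending to one. For $n$ large enough, therefore, with arbitrarily high probability,
\[
\fpi{2}(t,z)\;\geq\;\frac{2 l_g^2\delta-o_P(1)}{u_g^2+o_P(1)}\;>\;\frac{l_g^2}{2 u_g^2}\delta
\]
uniformly for $(t,z)\in\M_\delta$, which is the claim. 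The main technical obstacle is the uniform convergence of the derivative estimators $\partial_1\hat h_{n,1}^{(2)}$ and $\gh{n}'$: differentiating the kernel inflates the variance by a factor $1/\an^2$, and condition $(C.2)$ is precisely what keeps these variances vanishing; the uniform continuity assumptions on $\partial_1\tilde k$ and $k'$ are what allow the pointwise bias and variance bounds to be strengthened to uniform ones. Once that uniform control is in hand, the remainder of the argument is algebraic.
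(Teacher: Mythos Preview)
Your proposal is correct and follows essentially the same route as the paper: derive the mixed-partial formula for $\fpi{2}$, use the identity $g\,\partial_1 h_1 - g'h_1 = g^2 f_0$, invoke uniform consistency of $\gh{n},\gh{n}',\hat h_{n,1}^{(2)},\partial_1\hat h_{n,1}^{(2)}$ over the relevant compact set, and combine with the bounds $l_g\le g\le u_g$ from Lemma~\ref{lem:bounds_g}. Two small remarks: (i) Lemma~\ref{lem:unif_cons_hn2_gn} in the paper already covers the derivative estimators $\gh{n}'$ and $\partial_1\hat h_{n,1}^{(2)}$ (via results of Silverman and Mokkadem et al.), so what you call the ``new ingredients'' are in fact supplied there rather than needing a separate covering argument; (ii) Lemma~\ref{lem:bounds_g} bounds $g$, not $\gh{n}$, so the bound on $\gh{n}^2$ should be stated as a consequence of uniform consistency of $\gh{n}$ together with Lemma~\ref{lem:bounds_g} (the paper uses $\gh{n}<2u_g$ with probability tending to one, giving the factor $4u_g^2$ in the denominator and hence the constant $l_g^2/(2u_g^2)$) --- your displayed inequality with $u_g^2+o_P(1)$ is fine, but the preceding sentence slightly misattributes the source.
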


\begin{proof}
Fix $\delta>0$. First note that since 
\be \frac{\partial^2}{\partial t\partial z} \int_0^z \hat h_{n,1}^{(2)}(t,v)\,dv = \partial_1 \hat h_{n,1}^{(2)}(t,z)\ee
we have the following expression for \fpi{2}
\bef\label{def:fn2}
\fpi{2}(t,z) = \frac{\partial^2}{\partial u\partial v} \Fpi{2}(u,v)\Big|_{(u,v)=(t,z)} = \frac{\hat g_n(t)\partial_1 \hat h_{n,1}^{(2)}(t,z) - \hat g_n'(t) \hat h_{n,1}^{(2)}(t,z)}{\hat g_n(t)^2}.
\eef

We first consider the numerator and prove that
\bef\label{pos_dens2_fn2}
P\big(\forall\ (t,z)\in\M_\delta\ :\ \hat g_n(t)\partial_1 \hat h_{n,1}^{(2)}(t,z) - \hat g_n'(t) \hat h_{n,1}^{(2)}(t,z) > 2l_g^2\delta \big) \conv 1.
\eef
For this, note that for all $(t,z)\in\M_\delta$
\be \lefteqn{\hat g_n(t)\partial_1 \hat h_{n,1}^{(2)}(t,z) - \hat g_n'(t) \hat h_{n,1}^{(2)}(t,z) = \hat g_n(t)\big(\partial_1\hat h_{n,1}^{(2)}(t,z)-\partial_1 h_1(t,z)\big) + \hat h_{n,1}^{(2)}(t,z)\big(g'(t)-\hat g_n'(t)\big)}\\
&& + \partial_1h_1(t,z)\big(\hat g_n(t)-g(t)\big) + g'(t)\big(h_1(t,z)-\hat h_{n,1}^{(2)}(t,z)\big) + g(t)\partial_1h_1(t,z) - g'(t)h_1(t,z)\\
&\geq& - \sup_{t\in proj_X\M_\delta} \hat g_n(t)\sup_{(t,z)\in\M_\delta}\big|\partial_1\hat h_{n,1}^{(2)}(t,z)-\partial_1 h_1(t,z)\big|\\
&& - \sup_{(t,z)\in\M_\delta}\hat h_{n,1}^{(2)}(t,z)\sup_{t\in proj_X\M_\delta}\big|g'(t)-\hat g_n'(t)\big|\\
&& - \sup_{(t,z)\in\M_\delta}\partial_1h_1(t,z)\sup_{t\in proj_X\M_\delta}\big|\hat g_n(t)-g(t)\big|\\
&& - \sup_{t\in proj_X\M_\delta}g'(t)\sup_{(t,z)\in\M_\delta}\big|h_1(t,z)-\hat h_{n,1}^{(2)}(t,z)\big| + g(t)\partial_1h_1(t,z) - g'(t)h_1(t,z),\ee
with $proj_X\M_\delta = \big\{t : (t,z)\in\M_\delta \mbox{ for some } z\big\}$. By Lemma \ref{lem:unif_cons_hn2_gn} all random terms converge to zero in probability. Since $g(t)\partial_1h_1(t,z) - g'(t)h_1(t,z) = g(t)^2f_0(t,z)$ we have that the last term is bounded below by $\inf_{(t,z)\in\M_\delta}g(t)^2f_0(t,z)\geq2l_g^2\delta$ by Lemma \ref{lem:bounds_g}.

By Lemma \ref{lem:bounds_g} and the uniform consistency of $\hat g_n$ [see Lemma \ref{lem:unif_cons_hn2_gn}], we have that $0<\frac12 l_g< \hat g_n(t)< 2u_g<\infty$ for all $t\in proj_X\M_\delta$ with probability converging to one. This implies that for all $(t,z)\in\M_\delta$
\be \fpi{2}(t,z) \geq \frac{\hat g_n(t)\partial_1 \hat h_{n,1}^{(2)}(t,z) - \hat g_n'(t) \hat h_{n,1}^{(2)}(t,z)}{4u_g^2} > \frac{2l_g^2}{4u_g^2}\delta,\ee
with probability converging to one. Hence (\ref{pos_dens_fn2}) follows.
\end{proof}

\begin{remark}
If, in addition to condition $(F.1)$, we assume that $f_0$ is uniformly continuous on $[0,\infty)^2$, this theorem implies that for each $\delta>0$ and $M>0$, the restriction of \Fpi{2}\ to the set $\big\{(t,z)\in[0,M]^2:f_0(t,z)\geq\delta\big\}$ will asymptotically be the restriction to this set of a bivariate distribution function $\tilde F_n$ on $[0,\infty)^2$.
\end{remark}

\section{Asymptotic distributions}\label{sec:asymp_Fpi1_Fpi2}
In this section we derive the asymptotic distribution of both plug-in inverse estimators. Although the estimator \Fpi{2}\ is more natural, we start with the estimator \Fpi{1}\ since deriving its asymptotic distribution is easier. Subsequently, we prove that for certain choices of the smoothing parameter \bn\ the estimators \Fpi{1}\ and \Fpi{2}\ are asymptotically equivalent, yielding the asymptotic distribution of \Fpi{2}.

\begin{theorem}\label{th:as_Fn1}
Assume $F_0$ and $g$ satisfy conditions $(F.1)$ and $(G.1)$. Also assume $k$ satisfies condition $(K.2)$. Fix $t_0, z_0>0$ such that $\partial_1^2F_0(t,z)$ and $g''(t)$ exist and are continuous in a neighborhood of $(t_0,z_0)$ and $t_0$, respectively, and $\partial_1^2F_0(t_0,z_0)+2g'(t_0)\partial_1F_0(t_0,z_0)/g(t_0)\not=0$ and $g(t_0)>0$. Then for $\an = cn^{-1/5}$,
\be n^{2/5}\big(\Fpi{1}(t_0,z_0) - F_0(t_0,z_0)\big) \Dconv \N(\mu_1,\sigma^2) \ee
where
\bef
\mu_1 &=& \frac12c^2m_2(k)\left\{\partial_1^2F_0(t_0,z_0)+2\frac{g'(t_0)\partial_1F_0(t_0,z_0)}{g(t_0)}\right\}, \label{def:mu_Fn1} \\
\sigma^2 &=& c^{-1}\frac{F_0(t_0,z_0)\big(1-F_0(t_0,z_0)\big)}{g(t_0)}\int k(u)^2\,du. \label{def:s_Fn1}
\eef
\end{theorem}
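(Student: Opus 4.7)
The plan is to exploit the ratio structure of $\Fpi{1}$ and reduce the problem to a single i.i.d.\ triangular-array average, to which a standard Lyapunov CLT applies. The denominator $\gh{n}(t_0)$ is the usual kernel density estimator of $g(t_0)$, and a direct integration against $h_{F_0}$, using $\int_0^{z_0}h_1(t,z)\,dz = g(t)F_0(t,z_0)$ (valid since $P(Y=0)=0$ and $Y\geq 0$ force $F_0(t,0)=0$), shows that the numerator
\[
\hat N_n^{(1)}(t_0,z_0) := \frac1n\sum_{i=1}^n 1_{[0,z_0]}(Z_i)\Delta_i k_{\an}(t_0-T_i)
\]
is the standard kernel estimator of $\psi(t_0)$ with $\psi(t):=g(t)F_0(t,z_0)$.

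I would then linearise the ratio. Writing
\[
\Fpi{1}(t_0,z_0) - F_0(t_0,z_0) = \frac{1}{\gh{n}(t_0)}\cdot\frac{1}{n}\sum_{i=1}^n\bigl[1_{[0,z_0]}(Z_i)\Delta_i - F_0(t_0,z_0)\bigr]k_{\an}(t_0-T_i),
\]
and noting $\gh{n}(t_0)\to g(t_0)>0$ in probability by standard kernel consistency, the factor $1/\gh{n}(t_0)$ may be replaced by $1/g(t_0)$ at the cost of a product term of order $O_P(n^{-4/5}) = o_P(n^{-2/5})$. The problem reduces to the asymptotics of $n^{-1}\sum_{i=1}^n \xi_{n,i}$ for the i.i.d.\ triangular array
\[
\xi_{n,i} := \frac{1}{g(t_0)}\bigl[1_{[0,z_0]}(Z_i)\Delta_i - F_0(t_0,z_0)\bigr]k_{\an}(t_0-T_i).
\]

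For the bias, integration against $h_{F_0}$ gives
\[
\E[\xi_{n,i}] = \frac{1}{g(t_0)}\int k_{\an}(t_0-t)\,g(t)\bigl[F_0(t,z_0) - F_0(t_0,z_0)\bigr]\,dt;
\]
substituting $u=(t_0-t)/\an$, Taylor-expanding the bracket to order $(t-t_0)^2$ around $t_0$, and invoking symmetry of $k$ to kill the linear term yields
\[
\E[\xi_{n,i}] = \tfrac12 \an^2 m_2(k)\Bigl\{\partial_1^2 F_0(t_0,z_0) + 2g'(t_0)\partial_1 F_0(t_0,z_0)/g(t_0)\Bigr\} + o(\an^2) = n^{-2/5}\mu_1 + o(n^{-2/5}),
\]
with $\mu_1$ as in (\ref{def:mu_Fn1}). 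For the variance, using $1_{[0,z_0]}(Z)^2=1_{[0,z_0]}(Z)$ and $\Delta^2=\Delta$, a similar $h_{F_0}$-integration and substitution gives $\an\Var(\xi_{n,i}) \to F_0(t_0,z_0)(1-F_0(t_0,z_0))g(t_0)^{-1}\int k(u)^2\,du$, hence $n^{4/5}\Var(n^{-1}\sum_i\xi_{n,i})\to\sigma^2$ with $\sigma^2$ as in (\ref{def:s_Fn1}).

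Lyapunov's condition holds since $|\xi_{n,i}|=O(\an^{-1})$ and $\Var(\xi_{n,i})\asymp\an^{-1}$ give $n^{-1/2}\E|\xi_{n,i}|^3/\Var(\xi_{n,i})^{3/2} = O((n\an)^{-1/2}) \to 0$; applying the CLT to $n^{-1}\sum_i\xi_{n,i}$ and combining with the bias yields $n^{2/5}(\Fpi{1}(t_0,z_0)-F_0(t_0,z_0)) \Dconv \N(\mu_1,\sigma^2)$. The main obstacle is making the linearisation rigorous: one must verify that $(1/\gh{n}(t_0) - 1/g(t_0))$ times the centred bracket is $o_P(n^{-2/5})$, which follows because both factors are $O_P(n^{-2/5})$ by standard kernel-estimator rates and $\gh{n}(t_0)$ is bounded away from zero with probability tending to one.
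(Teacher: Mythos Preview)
Your argument is correct and complete; the linearisation identity
\[
\Fpi{1}(t_0,z_0)-F_0(t_0,z_0)=\frac{1}{\gh{n}(t_0)}\cdot\frac1n\sum_{i=1}^n\bigl[1_{[0,z_0]}(Z_i)\Delta_i-F_0(t_0,z_0)\bigr]k_{\an}(t_0-T_i)
\]
is exact, and the replacement of $1/\gh{n}(t_0)$ by $1/g(t_0)$ is justified by the $O_P(n^{-2/5})\times O_P(n^{-2/5})$ bound you give. Your bias and variance expansions and the Lyapunov check are all right. One phrasing point: in the bias step you must expand the product $g(t)\bigl[F_0(t,z_0)-F_0(t_0,z_0)\bigr]$, not just ``the bracket'', since the cross-term $2g'(t_0)\partial_1 F_0(t_0,z_0)/g(t_0)$ in $\mu_1$ comes from the first-order term of $g$ multiplying the first-order term of $F_0$; your final formula shows you did this, but the wording could be read otherwise.

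The paper takes a different route: it applies the Lindeberg--Feller CLT to the \emph{bivariate} vector $n^{-3/5}\bigl(k_{\an}(t_0-T_i),\,1_{[0,z_0]}(Z_i)\Delta_i k_{\an}(t_0-T_i)\bigr)^{\!\top}$, obtaining a joint Gaussian limit for numerator and denominator, and then invokes the Delta method with $\phi(u,v)=v/u$. Your approach collapses these two steps into one univariate CLT by exploiting the ratio structure algebraically, which is more elementary and makes the bias formula (and the cancellation of the $g''$ contribution) appear directly. The paper's route, on the other hand, is more modular: the joint limit for $\bigl(\gh{n}(t_0),\hat N_n^{(1)}(t_0,z_0)\bigr)$ could be reused for other functionals of that pair without redoing the expansion.
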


\begin{remark}
In case $\partial_1^2F_0(t_0,z_0)+2g'(t_0)\partial_1F_0(t_0,z_0)/g(t_0)=0$, the rate of convergence changes because the bias is of a different asymptotic order. This is in line with results for other kernel smoothers in case of vanishing first order bias terms.
\end{remark}

The proof of this theorem, a combination of the Lindeberg-Feller Central Limit Theorem and the Delta-method, is given in the Appendix.

To illustrate the pointwise asymptotic results we simulate $m=1\,000$ times a sample of size $n=5\,000$, using $F_0(x,y)=\frac12xy(x+y)$ for $x,y\in[0,1]$ and $g(t) = 2t$ for $t\in[0,1]$. For each sample we determine the estimator $\Fpi{1}(0.5,0.5)$ (using kernel density $k(y) = \frac34(1-y^2)1_{[-1,1]}(y)$ and smoothing parameter $\an=0.09$) and the resulting value of $n^{2/5}\big(\Fpi{1}(0.5,0.5)-F_0(0.5,0.5)\big)$. Figure \ref{fig:asymp_Fn1} shows these $m$ values, in a QQ-plot (with the line $y=\mu_1+x\sigma$) as well as in a histogram (with the $\N(\mu_1,\sigma^2)$ density). Here $\mu_1$ and $\sigma$ are as defined in (\ref{def:mu_Fn1}) and (\ref{def:s_Fn1}) for this $F_0$ and $g$. 

\begin{center}
{\bf [Figure \ref{fig:asymp_Fn1} here]}
\end{center}

Under definition $(K.1)$ and assumptions $(K.2)$ and $(K.3)$ on the kernel densities $k$ and $\tilde k$, we can prove that for $t_0,z_0>0$ fixed $n^{2/5}\big(\Fpi{2}(t_0,z_0)-\Fpi{1}(t_0,z_0)\big)$ converges to zero in probability whenever \bn\ converges faster to zero than $n^{-1/5}$. As a consequence, these estimators are (first order) asymptotically equivalent. For \bn\ tending to zero slower than $n^{-1/5}$, $n^{2/5}\big|\Fpi{2}(t_0,z_0)-\Fpi{1}(t_0,z_0)\big| \conv \infty$ in probability. These results are more precisely stated in Theorem \ref{th:as_equiv_Fn1Fn2} and Corollary \ref{cor:as_distr_Fn2}.

\begin{theorem}\label{th:as_equiv_Fn1Fn2}
Assume $F_0$ and $g$ satisfy conditions $(F.1)$ and $(G.1)$. Also assume $k$ and $\tilde k$ satisfy conditions $(K.2)$ and $(K.3)$. Fix $t_0, z_0>0$ such that $\partial_2^2 F_0(t,z)$ and $g(t)$ exist and are continuous in a neighborhood of $(t_0,z_0)$ and $t_0$, respectively, and $\partial_2^2 F_0(t_0,z_0)\not=0$ and $g(t_0)\not=0$. Let $\an = c_1n^{-1/5}$ and $\bn = c_2n^{-\beta}$, then for $\beta > 1/5$
\be n^{2/5}\big(\Fpi{2}(t_0,z_0) - \Fpi{1}(t_0,z_0) \big) \Pconv 0,\ee
for $\beta = 1/5$
\be n^{2/5}\big(\Fpi{2}(t_0,z_0) - \Fpi{1}(t_0,z_0) \big) \Pconv \frac12c_2^2m_2(k)\partial_2^2F_0(t_0,z_0)\ee
while for $\beta <1/5$ $n^{2/5}\big|\Fpi{2}(t_0,z_0) - \Fpi{1}(t_0,z_0) \big| \Pconv \infty$.
\end{theorem}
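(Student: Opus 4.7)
The plan is to exploit that $\Fpi{1}$ and $\Fpi{2}$ share the common denominator $\hat g_n(t_0)$, which by Lemma \ref{lem:unif_cons_hn2_gn} converges in probability to $g(t_0)>0$. By Slutsky's theorem it suffices to analyse the difference of numerators $N_2-N_1$, where $N_j$ denotes the numerator of $\Fpi{j}(t_0,z_0)$. Writing $N_2-N_1=\tfrac{1}{n}\sum_i D_{n,i}$ with
\be D_{n,i}=\Delta_i\Bigl\{\int_0^{z_0}\tilde k_{\an,\bn}(t_0-T_i,z-Z_i)\,dz-1_{[0,z_0]}(Z_i)\,k_{\an}(t_0-T_i)\Bigr\},\ee
and substituting $v=(z-Z_i)/\bn$ together with the marginal identity $\int\tilde k(u,s)\,ds=k(u)$ from $(K.1)$, one sees that $D_{n,i}\equiv 0$ unless $Z_i$ lies in a $\bn$-neighbourhood of either $0$ or $z_0$; on this exceptional event $|D_{n,i}|\le 2k_{\an}(t_0-T_i)=O(\an^{-1})$.

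For the bias I would change variables to $u=(t_0-T)/\an$ and $v'=(z_0-Z)/\bn$ in $E[D_{n,1}]$ and isolate the strip around $z_0$ (the strip around $0$ is treated analogously and contributes only lower-order terms). Letting $\Psi(u,v)=\int_{-1}^{v}\tilde k(u,s)\,ds$ and Taylor-expanding $h_1(t_0-\an u,z_0-\bn v')$ around $(t_0,z_0)$, this strip contributes
\be \bn\iint h_1(t_0-\an u,z_0-\bn v')\bigl[\Psi(u,v')-1_{v'\ge 0}k(u)\bigr]\,dv'\,du.\ee
The crucial observation is that the apparent leading $O(\bn)$ coefficient equals $\int_{-1}^{1}(1-s)\tilde k_2(s)\,ds-1$, where $\tilde k_2(s)=\int\tilde k(u,s)\,du$; this vanishes because $\int\tilde k_2=1$ and $\int s\tilde k_2(s)\,ds=0$ by $(K.3)$. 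Pushing the expansion one further order, using $\iint w_2^2\tilde k\,dw_1\,dw_2=m_2(k)$ (combining $(K.1)$ and $(K.3)$) and $\partial_2 h_1(t_0,z_0)=g(t_0)\partial_2^2 F_0(t_0,z_0)$ then yields
\be E[N_2-N_1]=\tfrac12\bn^2 m_2(k)\,g(t_0)\,\partial_2^2 F_0(t_0,z_0)+o(\bn^2).\ee

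For the variance, $|D_{n,1}|=O(\an^{-1})$ and the event $\{D_{n,1}\ne 0\}$ further requires $|T_1-t_0|\le\an$, so $P(D_{n,1}\ne 0)=O(\an\bn)$ and $E[D_{n,1}^2]=O(\bn/\an)$; hence $\Var(N_2-N_1)=O(\bn/(n\an))$, and with $\an=c_1n^{-1/5}$ this gives $\Var(n^{2/5}(N_2-N_1))=O(\bn)\to 0$. Combining bias and variance and dividing by $\hat g_n(t_0)\Pconv g(t_0)$,
\be n^{2/5}\bigl(\Fpi{2}(t_0,z_0)-\Fpi{1}(t_0,z_0)\bigr)=\tfrac12 c_2^{2}m_2(k)\,\partial_2^2 F_0(t_0,z_0)\cdot n^{2/5-2\beta}+o_P(1),\ee
which yields the three announced regimes according as $\beta>1/5$, $\beta=1/5$, or $\beta<1/5$. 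The main obstacle is the cancellation of the naive $O(\bn)$ bias term; without the compatibility of the marginal identity in $(K.1)$ with the moment conditions in $(K.3)$ this cancellation would fail and $\Fpi{2}-\Fpi{1}$ would be of the larger order $\bn$, incompatible with the statement.
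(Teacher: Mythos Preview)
Your proof is correct and follows essentially the same route as the paper's: write $\Fpi{2}-\Fpi{1}$ as $R_n/\hat g_n(t_0)$ with $R_n$ an i.i.d.\ average supported on the $\bn$-strip around $z_0$, bound $\Var(n^{2/5}R_n)=O(\bn)\to0$ via the crude estimate $|U_i|\le 1_{|Z_i-z_0|\le\bn}k_{\an}(t_0-T_i)$, and obtain the bias by Taylor expansion after the change of variables, with the key $O(\bn)$ cancellation coming from $\iint w_2\tilde k=0$. Your organization through the partial integral $\Psi(u,v)=\int_{-1}^v\tilde k(u,s)\,ds$ is just a repackaging of the paper's two strip integrals (the paper keeps the $(z_0-\bn,z_0]$ and $(z_0,z_0+\bn]$ pieces separate and changes the order of integration at the end), and your remark about the lower-boundary strip near $0$ is something the paper silently omits rather than handles differently.
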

The proof of this theorem is given in the Appendix.

As a consequence of this theorem, the estimators \Fpi{1}\ and \Fpi{2}\ are pointwise asymptotically equivalent for $\beta > 1/5$, while for $\beta = 1/5$, $\Fpi{2}(t_0,z_0)$ has an additional (possibly negative) asymptotic bias term.

\begin{corollary}\label{cor:as_distr_Fn2}
In addition to the conditions of Theorem \ref{th:as_Fn1}, assume $\partial_2^2F_0(t_0,z_0)\not=0$ and $g(t_0)\not=0$. Let $\an = c_1n^{-1/5}$ and $\bn = c_2n^{-\beta}$. Then for $\beta > 1/5$
\be n^{2/5}\big(\Fpi{2}(t_0,z_0) - F_0(t_0,z_0)\big) \Dconv \N(\mu_1,\sigma^2) \ee
where $\mu_1$ and $\sigma^2$ are defined in (\ref{def:mu_Fn1}) and (\ref{def:s_Fn1}) (with $c=c_1$). For $\beta = 1/5$
\be n^{2/5}\big(\Fpi{2}(t_0,z_0) - F_0(t_0,z_0)\big) \Dconv \N(\mu_2,\sigma^2),\ee
where
\bef\label{def:mu_Fn2}
\mu_2 = \mu_1 + \frac12c_2^2m_2(\tilde k)\partial_2^2F_0(t_0,z_0).
\eef
\end{corollary}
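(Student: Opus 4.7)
The plan is to derive the asymptotic distribution of $\Fpi{2}$ as a direct consequence of Theorem \ref{th:as_Fn1} and Theorem \ref{th:as_equiv_Fn1Fn2} via the trivial decomposition
\be
n^{2/5}\bigl(\Fpi{2}(t_0,z_0) - F_0(t_0,z_0)\bigr) = n^{2/5}\bigl(\Fpi{1}(t_0,z_0) - F_0(t_0,z_0)\bigr) + n^{2/5}\bigl(\Fpi{2}(t_0,z_0) - \Fpi{1}(t_0,z_0)\bigr).
\ee
I would first verify that the hypotheses of Theorem \ref{th:as_Fn1} together with the additional assumptions $\partial_2^2F_0(t_0,z_0)\neq 0$ and $g(t_0)\neq 0$ imply all hypotheses needed in Theorem \ref{th:as_equiv_Fn1Fn2}: continuity of $\partial_2^2 F_0$ near $(t_0,z_0)$ follows from $(F.1)$ together with the assumed regularity, and $g(t_0) > 0$ is already in the conditions of Theorem \ref{th:as_Fn1}.

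Applying Theorem \ref{th:as_Fn1} with $c = c_1$ yields that the first term on the right hand side converges in distribution to $\N(\mu_1,\sigma^2)$, with $\mu_1$ and $\sigma^2$ as in (\ref{def:mu_Fn1}) and (\ref{def:s_Fn1}). Applying Theorem \ref{th:as_equiv_Fn1Fn2} handles the second term: for $\beta > 1/5$ it converges in probability to $0$, and for $\beta = 1/5$ it converges in probability to the deterministic constant $\frac12 c_2^2 m_2(k)\partial_2^2 F_0(t_0,z_0)$. Because assumption $(K.1)$ combined with $(K.3)$ forces $m_2(k) = \iint w_1^2 \tilde k(w_1,w_2)\,dw_1\,dw_2 = m_2(\tilde k)$, this constant equals $\frac12 c_2^2 m_2(\tilde k)\partial_2^2 F_0(t_0,z_0)$, matching the definition of $\mu_2$ in (\ref{def:mu_Fn2}).

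Slutsky's theorem then combines the two convergences: in the case $\beta > 1/5$ the second term is asymptotically negligible and the sum still converges to $\N(\mu_1,\sigma^2)$, while in the case $\beta = 1/5$ the second term contributes an additional deterministic shift, giving convergence to $\N(\mu_1 + \frac12 c_2^2 m_2(\tilde k)\partial_2^2 F_0(t_0,z_0),\sigma^2) = \N(\mu_2,\sigma^2)$. No substantial new technical work is required, since both the bias and variance contributions are already encoded in the two cited results; the corollary is truly a corollary. The one bookkeeping point worth flagging is the reconciliation of $m_2(k)$ and $m_2(\tilde k)$ via $(K.1)$--$(K.3)$, which explains why no distinction is made between them in the final bias formula.
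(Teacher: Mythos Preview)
Your proof is correct and follows exactly the approach intended by the paper, whose own proof consists of the single sentence ``This immediately follows from Theorem \ref{th:as_equiv_Fn1Fn2}''; you have simply spelled out the decomposition and the Slutsky step that make that immediacy explicit. Your remark reconciling $m_2(k)$ with $m_2(\tilde k)$ via $(K.1)$ is a nice clarification, since the statement of Theorem \ref{th:as_equiv_Fn1Fn2} uses $m_2(k)$ while its proof and the corollary use $m_2(\tilde k)$.
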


\begin{proof}
This immediately follows from Theorem \ref{th:as_equiv_Fn1Fn2}.
\end{proof}

Figure \ref{fig:Rn} shows the values of $n^{2/5}\big(\Fpi{2}(0.5,0.5)-\Fpi{1}(0.5,0.5)\big)$ as a function of $n$ with $\an=\frac12n^{-1/5}$ and $\bn = \frac12 n^{-1/3}$. The solid lines are the lines $\pm \frac12n^{-1/6}$, the order of the standard deviation of $n^{2/5}\big(\Fpi{2}(0.5,0.5)-\Fpi{1}(0.5,0.5)\big)$ (see the proof of Theorem \ref{th:as_equiv_Fn1Fn2} in the appendix). For $F_0$ and $g$ we used the same setting as in Figure \ref{fig:asymp_Fn1}, for $\tilde k$ we used $\tilde k(x,y)=k(x)k(y)$ the product kernel density with $k(u) = \frac34(1-u^2)$ for $u\in[-1,1]$.

\begin{center}
{\bf [Figure \ref{fig:Rn} here]}
\end{center}

Figure \ref{fig:asymp_Fn2} shows $m=1\,000$ values of $n^{2/5}\big(\Fpi{2}(0.5,0.5)-F_0(0.5,0.5)\big)$ for $n=5\,000$, $\an=\frac12n^{-1/5}$ and $\bn=\frac12n^{-1/3}$, in a QQ-plot (with the line $y=\mu_1+x\sigma$) as well as in a histogram (with the $\N(\mu_1,\sigma^2)$ density). Here $\mu_1$ and $\sigma$ are as defined in (\ref{def:mu_Fn1}) and (\ref{def:s_Fn1}) for $F_0$, $g$ and $\tilde k$ the same as in Figure \ref{fig:Rn}.

\begin{center}
{\bf [Figure \ref{fig:asymp_Fn2} here]}
\end{center}

\section{Smooth functionals}\label{sec:smooth_functionals}
It is well known that in the current status model certain functionals of the model can be estimated at $\sqrt{n}$ rate, although the pointwise estimation rate is lower, see, e.g., \citeN{groeneboom:96}. In the continuous marks model we have a similar situation and we briefly sketch how the theory of smooth functionals applies here. In the ``hidden space'' one would be allowed to observe the random variable $(X,Y)$ with distribution function $F$, and the so-called score operator from functions on the hidden space to functions on the observation space is in this case given by
\be [L_F(a)](t,z,\delta)&=&\E\,\{a(X,Y)|(T,Z,\Delta)=(t,z,\delta)\}\\
&=&\frac{\delta\int_0^t a(x,z)\,dF_z(x)}{F_z(t)}+\frac{(1-\delta)\int_{x=t}^{\infty}\int_{y=0}^{\infty} a(x,y)\,dF_y(x)dy}{1-F(t,\infty)},
\ee
where $F_z(x)=\partial_2 F(x,z)=\frac{\partial}{\partial z}F(x,z)$. Note that the $F_z$ correspond to the component subdistribution functions in the model with finitely many competing risks and that $F(t,\infty)=\int_0^{\infty} F_z(t)\,dz$. Here $L_F$ is a mapping from $L_2^0(F)$ to $L_2^0(H)$, where $L_2^0(F)$ denotes the space of square integrable functions $a$ with zero expectation, i.e.
\bef\label{hidden_space}
\E_F\, a(X,Y)=\int a(x,y)\,dF(x,y)=0,\qquad \E_F\, a(X,Y)^2=\int a(x,y)^2\,dF(x,y)<\infty.
\eef
Similarly, $L_2^0(H)$ is the space of functions $b$ with the properties:
\be \E_H\, b(T,Z,\Delta)=\int b(t,z,\delta)\,dH(t,z,\delta)=0,\qquad \E_H\, b(T,Z,\Delta)^2=\int b(t,z,\delta)^2\,dH(t,z,\delta)<\infty.\ee
Using the first relation in (\ref{hidden_space}) we get:
\be [L_F(a)](t,z,\delta)&=&\frac{\delta\int_0^t a(x,z)\,dF_z(x)}{F_z(t)}+\frac{(1-\delta)\int_{x=t}^{\infty}\int_{y=0}^{\infty} a(x,y)\,dF_y(x)\,dy}{1-F(t,\infty)}\\
&=&\frac{\delta\int_0^t a(x,z)\,dF_z(x)}{F_z(t)}-\frac{(1-\delta)\int_{x=0}^t\int_{y=0}^{\infty} a(x,y)\,dF_y(x)\,dy}{1-F(t,\infty)}.
\ee

We now consider the adjoint of $L_F$, mapping the functions $b\in L_2^0(H)$ back into $L_2^0(F)$. The adjoint is given by:
\be [L_F^*(b)](x,y)=\int_{t=x}^{\infty} b(t,y,1)\,dG(t)+\int_{t=0}^x b(t,0,0)\,dG(t).\ee
This is analogous to what we get in the current status model, see e.g., \citeN{groeneboom:96}.

In order to make this somewhat more concrete, we consider the functional
\bef\label{mean_functional}
\mu_F=\int x\, dF_{0,X}(x)=\int x\,dF(x,\infty).
\eef
Then the score function in the hidden space is:
\be a(x,y)=x-\int x\,dF(x,\infty)=x -\iint u\,dF_w(u)\,dw,\ee
so only depends on the first argument, and we have to solve the equation
\be \int_{t=x}^{\infty} b(t,z,1)\,dG(t)+\int_{t=0}^x b(t,0,0)\,dG(t)=x -\iint u\,dF_w(u)\,dw,\ee
where $b$ has to be in the (closure of the) range of the score operator, so this would be
\be b(t,z,\delta)=\frac{\delta\int_0^t a(x,z)\,dF_z(x)}{F_z(t)}-\frac{(1-\delta)\int_{x=0}^t\int_{y=0}^{\infty} a(x,y)\,dF_y(x)\,dy}{1-F(t,\infty)}, \mbox{ for some $a$,}\ee
if $b$ is in the range itself (and not only its closure). We therefore consider the equation:
\be \int_{t=x}^{\infty}\frac{\int_{u=0}^t a(u,z)\,dF_z(u)}{F_z(t)}\,dG(t)-\int_{t=0}^x\frac{\int_{u=0}^t\int_{y=0}^{\infty} a(u,y)\,dF_y(u)\,dy}{1-F(t,\infty)}\,dG(t) =x -\iint u\,dF_w(u)\,dw.\ee
Differentiation w.r.t.\ $x$ yields:
\be -\frac{\int_{u=0}^x a(u,z)\,dF_z(u)}{F_z(x)} -\frac{\int_{u=0}^x\int_{y=0}^{\infty} a(u,y)\,dF_y(u)\,dy}{1-F(x,\infty)} =\frac1{g(x)}.\ee
Letting $\phi(x,z)=\int_{u=0}^x a(u,z)\,dF_z(u)$, this is solved by taking
\be \phi(x,z)=-\frac{F_z(x)\big(1-F(x,\infty)\big)}{g(x)}. \ee
So we get
\be b(t,z,\delta) &=& -\frac{\delta F_z(t)\big(1-F(t,\infty)\big)}{F_z(t)g(t)} + \frac{(1-\delta)\big(1-F(t,\infty)\big)\int_{y=0}^{\infty}F_y(t)\,dy}{\big(1-F(t,\infty)\big)g(t)}\\
&=&-\frac{\delta\big(1-F(t,\infty)\big)}{g(t)} + \frac{(1-\delta)F(t,\infty)}{g(t)},\ee
implying that the efficient asymptotic variance for estimating the mean functional $\mu_F$, defined by (\ref{mean_functional}), is given by:
\bef\label{information_lowerbound}
\int b(t,z,\delta)^2\,dH(t,z,\delta)=\int\frac{F(t,\infty)\big(1-F(t,\infty)\big)}{g(t)}\,dt,
\eef
which (not surprisingly) is the same expression as one gets in the current status model.

The next question becomes whether taking $\int x\,d\hat F_n(x,\infty)$, where $\hat F_n$ is one of our proposed estimators, will lead to an efficient estimate of $\mu_F$, in the sense that it converges at rate $\sqrt{n}$, with an asymptotic variance which attains the information lower bound (\ref{information_lowerbound}).

Let us consider the estimator, defined by (\ref{def:Fn_one}), and more specifically, the estimator obtained by taking $k(y)=\tfrac121_{[-1,1]}(y)$. Then (\ref{def:Fn_one}) becomes
\be \Fpi{1}(x,z)=\frac{\int_{u\in[x-\an,x+\an],\,y\in(0,z]}d\Hn{n}(u,y,1)}{\int_{u\in[x-\an,x+\an],\,y\geq0}d\Hn{n}(u,y,\delta)},\ee
where \Hn{n}\ is the empirical distribution of the sample $W_1, \ldots, W_n$. Also assume that $f$ has compact support, say $[0,1]^2$, as in the setting of Figure \ref{fig:asymp_Fn1}. Then we get as the estimate of $F_{0,X}$:
\be \Fpi{1}(x,1)=\Fpi{1}(x,\infty)=\frac{\int_{u\in[x-\an,x+\an],\,y>0}d\Hn{n}(u,y,1)}{\int_{u\in[x-\an,x+\an],\,y\geq0}d\Hn{n}(u,y,\delta)}.\ee

To see whether this estimator leads to an efficient estimate of $\mu_F$, we have to perform a bias-variance analysis. We first consider the bias. Let $F_{\an}$ be defined by
\be F_{\an}(x)=\frac{\int_{u\in[x-\an,x+\an],\,y>0}dH_{F_0}(u,y,1)}{\int_{u\in[x-\an,x+\an],\,y\geq0}dH_{F_0}(u,y,\delta)},\ee
where $H_{F_0}$ is the distribution function of $(T,Z,\Delta)$ in the observation space. Then
\be \lefteqn{ \int x\,dF_{\an}(x)=\int_0^1\big(1-F_{\an}(x)\big)\,dx =\int_0^1\frac{\int_{u\in[x-\an,x+\an]}dH_{F_0}(u,0,0)}{\int_{u\in[x-\an,x+\an],\,y\geq0}dH_{F_0}(u,y,\delta)}\,dx}\\
&=&\int_{u=-\an}^{\an}\int_{x\in[0,u+\an]}\frac{g(u)\big(1-F_0(u,1)\big)}{\int_{x-\an}^{x+\an}g(v)\,dv}\,dx\,du
+\int_{u=\an}^{1-\an}\int_{x\in[u-\an,u+\an]}\frac{g(u)\big(1-F_0(u,1)\big)}{\int_{x-\an}^{x+\an}g(v)\,dv}\,dx\,du\\
&&\qquad+\int_{u=1-\an}^{1+\an}\int_{x\in[u-\an,1]}\frac{g(u)\big(1-F_0(u,1)\big)}{\int_{x-\an}^{x+\an}g(v)\,dv}\,dx\,du \ee
We have, if $g$ is twice continuously differentiable and stays away from zero on $[0,1]$
\be &&\hspace{-2cm}\int_{x\in[u-\an,u+\an]}\frac1{G(x+\an)-G(x-\an)}\,dx=\int_{x\in[u-\an,u+\an]}\frac1{2\an g(x)+\tfrac16g''(x)\an^3+\dots}\,dx\\
&=&\int_{x\in[u-\an,u+\an]}\frac1{2\an g(x)\big(1+O(\an^2)\big)}\,dx =\frac1{g(u)}+O\left(\an^2\right), \ee
and hence
\be\int_{u=-\an}^{\an}\int_{x\in[0,u+\an]}\frac{g(u)\big(1-F_0(u,1)\big)}{\int_{x-\an}^{x+\an}g(v)\,dv}\,dx\,du=\int_{u=0}^{\an}\big(1-F_0(u,1)\big)\,du+O\left(\an^2\right).\ee
We also have
\be\int_{u=\an}^{1-\an}\int_{x\in[u-\an,u+\an]}\frac{g(u)\big(1-F_0(u,1)\big)}{\int_{x-\an}^{x+\an}g(v)\,dv}\,dx\,du=\int_{u=\an}^{1-\an}\big(1-F_0(u,1)\big)\,du+O\left(\an^2\right),\ee
and similarly
\be\int_{u=1-\an}^{1+\an}\int_{x\in[u-\an,1]}\frac{g(u)\big(1-F_0(u,1)\big)}{\int_{x-\an}^{x+\an}g(v)\,dv}\,dx\,du=\int_{u=1-\an}^1\big(1-F_0(u,1)\big)\,du+O\left(\an^2\right).\ee

So we obtain
\bef\label{bias_term}
\int_0^1\big(1-F_{\an}(x)\big)\,dx=\int_0^1\big(1-F_0(x,1)\big)\,dx+O\left(\an^2\right).
\eef
Empirical process methods give us
\bef\label{var_term}
\int\big(\Fpi{1}(x,1)-F_{\an}(x)\big)\,dx=O_p\left(n^{-1/2}\right).
\eef
So (\ref{bias_term}) and (\ref{var_term}) give us that, if (for example) $\an$ is of order $n^{-1/3}$,
\bef\label{sqrt{n}_order}
\int\big(\Fpi{1}(x,1)-F_0(x,1)\big)\,dx=O_p\left(n^{-1/2}\right).
\eef
Note that this does not follow if $\an$ is of order $n^{-1/5}$, since the bias term is too large in that case!

For the asymptotic variance, one has to analyze:
\be\int_{x=0}^1\left\{\frac{\int_{u\in[x-\an,x+\an]}d\Hn{n}(u,0,0)}{\int_{u\in[x-\an,x+\an],\,y\in[0,1]}d\Hn{n}(u,y,\delta)}-\frac{\int_{u\in[x-\an,x+\an]}dH_{F_0}(u,0,0)}{\int_{u\in[x-\an,x+\an],\,y\in[0,1]}dH_{F_0}(u,y,\delta)}\right\}\,dx,\ee
which can be written as
\be\lefteqn{\int_{x=0}^1\frac{\int_{u\in[x-\an,x+\an]}d\left(\Hn{n}-H_{F_0}\right)(u,0,0)}{\int_{u\in[x-\an,x+\an],\,y\in[0,1]}d\Hn{n}(u,y,\delta)}\,dx}\\
&&\hspace{2cm}-\int_{x=0}^1\frac{\int_{u\in[x-\an,x+\an],\,y\in[0,1]}d\left(\Hn{n}-H_{F_0}\right)(u,y,\delta)\int_{u\in[x-\an,x+\an]}dH_{F_0}(u,0,0)}{\int_{u\in[x-\an,x+\an],\,y\in[0,1]}d\Hn{n}(u,y,\delta)\int_{u\in[x-\an,x+\an],\,y\in[0,1]}dH_{F_0}(u,y,\delta)}\,dx\\
&\sim&\int_{x=0}^1\frac{F_0(x,1)\int_{u\in[x-\an,x+\an]}d\left(\Hn{n}-H_{F_0}\right)(u,0,0)}{2g(x)\an}\,dx\\
&&\hspace{2cm}-\int_{x=0}^1\frac{\big(1-F_0(x,1)\big)\int_{u\in[x-\an,x+\an],\,y\in(0,1]}d\left(\Hn{n}-H_{F_0}\right)(u,y,1)}{2g(x)\an}\,dx\\
&\sim&\int_{u\in[0,1]}\frac{F_0(u,1)}{g(u)}\,d\left(\Hn{n}-H_{F_0}\right)(u,0,0)-\int_{u\in[0,1],\,y\in(0,1]}\frac{1-F_0(u,1)}{g(u)}\,d\left(\Hn{n}-H_{F_0}\right)(u,y,1).\ee
So the asymptotic variance is given by:
\be \lefteqn{\int_{u\in[0,1]}\frac{F_0(u,1)^2}{g(u)^2}\,dH_{F_0}(u,0,0)+\int_{u\in[0,1],\,y\in(0,1]}\frac{\big(1-F_0(u,1)\big)^2}{g(u)^2}\,dH_{F_0}(u,y,1)}\\
&=&\int_0^1\frac{F_0(u,1)^2\big(1-F_0(u,1)\big)}{g(u)}\,du+\int_0^1\frac{\big(1-F_0(u,1)\big)^2F_0(u,1)}{g(u)}\,du\\
&=&\int_0^1\frac{F_0(u,1)\big(1-F_0(u,1)\big)}{g(u)}\,du=\int_0^1\frac{F_0(u,\infty)\big(1-F_0(u,\infty)\big)}{g(u)}\,du.\ee

The conclusion is that in this example, our estimator of $\mu_F$ converges at rate $\sqrt{n}$ and that its asymptotic variance attains the information lower bound, \emph{provided the bandwidth $\an$ tends to zero faster than $n^{-1/4}$}. It also illustrates that a bandwidth of order $n^{-1/5}$, which is an obvious choice for the pointwise estimation, is not suitable if we want to estimate smooth functionals, a phenomenon that seems (more or less) well known. Similar analyses can be performed for other smooth functionals, but since the local estimation problem is the main focus of our paper, we will not pursue this further here.

\section{Simulation study}\label{sec:sim_study}
The estimators \Fpi{1}\ and \Fpi{2}\ are asymptotically equivalent for sufficiently small choices of the smoothing parameter $\bn$. To get some insight in the finite sample differences between the estimators, we run a simulation study. We simulated data according to $F_0(x,y)=\frac12xy(x+y)$ for $x,y\in[0,1]$ and $g(t) = 2t$ for $t\in[0,1]$ for different sample sizes $n=500$, $n=1\,000$, $n=5\,000$ and $n=10\,000$. For each simulation we computed the estimators $\Fpi{1}(t_0,z_0)$ and $\Fpi{2}(t_0,z_0)$ for two different values of $(t_0,z_0)$ and different values of the smoothing parameters $\an$ and $\bn$. We repeated this $B=250$ times, resulting in $250$ estimates $\hat F_{n,\an,\bn}^{(i),1}(t_0,z_0), \hat F_{n,\an,\bn}^{(i),2}(t_0,z_0), \ldots, \hat F_{n,\an,\bn}^{(i),250}(t_0,z_0)$ ($i=1,2$) for each value of the smoothing parameters $\an$ and $\bn$. Then, we estimated the Mean Squared Error (MSE) of the estimator $\Fpi{i}(t_0,z_0)$ by
\be \frac1B\sum_{j=1}^B \left(\hat F_{n,\an,\bn}^{(i),j}(t_0,z_0)-F_0(t_0,z_0)\right)^2.\ee

Table \ref{table:sim_study} shows the minimum value of the estimated MSE for each estimator, for each $n$ and in two different points $(t_0,z_0)$. It also shows the values of the smoothing parameters $\an$ and $\bn$ that yielded this value. The standard error of the mean of the squared differences $\big(\hat F_{n,\an,\bn}^{(i),j}(t_0,z_0)-F_0(t_0,z_0)\big)^2$ are given in brackets. The binned MLE $\tilde F_n$ studied by \citeN{maathuis:08} and the Maximum Smoothed Likelihood Estimator (MSLE) \FMS\ studied by \shortciteN{witte:10} are included in this simulation study.
\begin{center}
{\bf [Table \ref{table:sim_study} here]}
\end{center}

Figure \ref{fig:sim_study} shows the resulting values of estimated MSEs as function of $\an$. For $\tilde F_n$, the smoothing parameter \an\ is the binwidth in $z$-direction, for \FMS\ we have that \an\ and \bn\ are the bindwidths in $t$- and $z$-direction, respectively. Both \Fpi{2}\ and \FMS\ depend on two smoothing parameters, and we fixed the value of $\bn$ to be equal to that value that yielded the overall minimal estimated MSEs of the estimators. Determining the optimal value(s) of the smoothing parameter(s) for $\tilde F_n$ and \FMS\ was a bit tedious; the estimated MSE of $\tilde F_n$ was very wiggly, the estimated MSE of \FMS\ is only nicely $U$-shaped for bigger values of $n$ due to computational issues. Although we choose the values of \an\ and \bn\ also as the minimizing binwidths of the estimated MSEs, these choices might not be good estimates.
\begin{center}
{\bf [Figure \ref{fig:sim_study} here]}
\end{center}

This simulation study, of which only some results are illustrated in Figure \ref{fig:sim_study} for \Fpi{1}\ and \Fpi{2}\ only, shows that the estimated MSEs of both plug-in inverse estimators are almost equal. Based on the estimated MSEs and the standard errors of the mean of the squared differences between the estimators \Fpi{1}\ and \Fpi{2}\ and the true distribution function, confidence intervals can be computed. The intervals for \Fpi{1}\ and \Fpi{2}\ have non-empty intersections, implying that for this specific example there is no significant finite sample difference between the smooth plug-in inverse estimators.

\section{Bandwidth selection in practice}\label{sec:sboot}
The estimators \Fpi{1}\ and \Fpi{2}\ depend on smoothing parameters \an\ and \bn\ (only \Fpi{2}). As with usual kernel density estimators, the estimators are quite sensitive to the choice of the smoothing parameters. Small values of \an\ and \bn\ will result in wiggly estimators reflecting the high variance, whereas big values of \an\ and \bn\ will give smooth stable, but biased, estimators. One way to obtain good smoothing parameters that depend on the data is via the smoothed bootstrap.

The focus of this paper is on the pointwise asymptotic behavior of the estimators \Fpi{1}\ and \Fpi{2}, so also the choice of \an\ and \bn\ is only considered locally at the point $(t_0,z_0)$. The smoothed bootstrap differs from the empirical bootstrap in the distribution it samples from. In the empirical bootstrap one samples from the empirical distribution function of the data, whereas in the smoothed bootstrap one samples from a usually slightly oversmoothed estimator for the observation density $h_{F_0}$.

We now describe this method more specifically in our model. Let $\hat g_{n,\alpha_0}=:\hat g_0$ and $\hat F_{n,\alpha_0,\beta_0}^{(2)}=:\hat F_0$ be the kernel estimator and the smooth plug-in inverse estimator for $g$ and $F_0$, respectively, with smoothing parameters $\alpha_0$ and $\beta_0$. Then, $(X_1^{*,1},Y_1^{*,1}),(X_2^{*,1},Y_2^{*,1}),\ldots, (X_n^{*,1},Y_n^{*,1})$ are sampled from $\hat F_0$, $T_1^{*,1}, T_2^{*,1},\ldots,T_n^{*,1}$ from $\hat g_0$ independently of $(X_i^{*,1},Y_i^{*,1})$. The variables $\Delta_i^{*,1}$ and $Z_i^{*,1}$ are defined as $1_{\{X_i^{*,1}\leq T_i^{*,1}\}}$ and $Y_i^{*,1}\cdot\Delta_i^{*,1}$, respectively. The estimators $\hat F_{n,\an,1}^{(1)}$ and $\hat F_{n,\an,\bn,1}^{(2)}$ are determined at the point $(t_0,z_0)$ for several values of \an\ and \bn\ based on the sample $(T_1^{*,1},Z_1^{*,1},\Delta_1^{*,1}), \ldots ,(T_n^{*,1},Z_n^{*,1},\Delta_n^{*,1})$. Note that now we make the dependence of the estimators on \an\ and \bn\ explicit in the notation of the estimators. Actually, we only need the precise values for those observations $(T_i^{*,1},Z_i^{*,1},\Delta_i^{*,1})$ that fall in $[t_0-\an,t_0+\an]\times[0,z_0+\bn]\times\{0,1\}$, the precise values of $T_i^{*,1}$ for those observations that fall in $[t_0-\an,t_0+\an]\times(z_0+\bn,\infty]\times\{1\}$ and the numbers of observations in the various regions outside these areas (rather than their exact locations) to compute $\hat F_{n,\an,1}^{(1)}(t_0,z_0)$ and $\hat F_{n,\an,\bn,1}^{(2)}(t_0,z_0)$. Hence, only on this strip monotonicity of $\hat F_0$ is needed as well as positivity of $\hat F_{n,\an,1}^{(1)}(\infty,\infty)-\hat F_{n,\an,1}^{(1)}(t_0+\an,\infty)$ and $\hat F_{n,\an,\bn,1}^{(2)}(\infty,\infty) -\hat F_{n,\an,\bn,1}^{(2)}(t_0+\an,\infty)$.

The procedure described above is repeated $B$ times resulting in $B$ estimators $\hat F_{n,\an,1}^{(1)}, \ldots, \hat F_{n,\an,B}^{(1)}$ and $\hat F_{n,\an,\bn,1}^{(2)}, \ldots, \hat F_{n,\an,\bn,B}^{(2)}$. Then, the MSEs of $\hat F_{n,\an}^{(1)}(t_0,z_0)$ and $\hat F_{n,\an,\bn}^{(2)}(t_0,z_0)$ can be estimated by
\be \widehat{MSE}^{(1)}(\an;t_0,z_0) &=& \frac1B\sum_{b=1}^B\left(\hat F_{n,\an,b}^{(1)}(t_0,z_0) - \hat F_0(t_0,z_0)\right)^2,\\
\widehat{MSE}^{(2)}(\an,\bn;t_0,z_0) &=& \frac1B\sum_{b=1}^B\left(\hat F_{n,\an,\bn,b}^{(2)}(t_0,z_0) - \hat F_0(t_0,z_0)\right)^2.\ee
Then, choose those values of \an\ and \bn\ that minimize $\widehat{MSE}^{(1)}(\an;t_0,z_0)$ and $\widehat{MSE}^{(2)}(\an,\bn;t_0,z_0)$ as smoothing parameters for the estimators $\Fpi{1}(t_0,z_0)$ and $\Fpi{2}(t_0,z_0)$, respectively.

Figure \ref{fig:bw_sel} shows the estimated MSEs for a small simulation study. In this study, we took $n=100$, $B=500$, $\alpha_0=\beta_0=0.4$, $t_0=z_0=0.5$ and $F_0$ and $g$ as in Section \ref{sec:sim_study}. It also shows
\be \widetilde{MSE}^{(1)}(\an;t_0,z_0) &=& \frac1B\sum_{b=1}^B\left(\hat F_{n,\an,b}^{(1)}(t_0,z_0) - F_0(t_0,z_0)\right)^2,\\
\widetilde{MSE}^{(2)}(\an,\bn;t_0,z_0) &=& \frac1B\sum_{b=1}^B\left(\hat F_{n,\an,\bn,b}^{(2)}(t_0,z_0) - F_0(t_0,z_0)\right)^2,\ee
as function of \an. For $\widehat{MSE}^{(2)}$ and $\widetilde{MSE}^{(2)}$ it only shows the estimates for that value of \bn\ that has the smallest estimated MSE.
\begin{center}
{\bf [Figure \ref{fig:bw_sel} here]}
\end{center}

There are other methods to obtain data-dependent bandwidths, for example via cross-validation \cite{rudemo:82}. Usually in cross-validation methods a global risk measure is minimized (like the Integrated MSE), hence its minimizer can be used as a global optimal bandwidth.

\section{Concluding remarks}
In this paper we consider two plug-in inverse estimators for the distribution function of the vector $(X,Y)$ in the current status continuous mark model. The first estimator \Fpi{1}\ is shown to be consistent and pointwise asymptotically normally distributed. However, \Fpi{1} does not have a Lebesgue density, since it only puts mass on the lines $[0,\infty)\times \{Z_i\}$ with $Z_i>0$ for $i=1,\ldots,n$. The second estimator, \Fpi{2}, does have a Lebesgue density. For a range of possible choices of the bandwidths \an\ and \bn\, we establish consistency of this estimator. Taking $\an=n^{-1/5}$ and $\bn=n^{-\beta}$, we prove that asymptotically for $\beta < 3/10$ the Lebesgue density of \Fpi{2}\ is positive on a region where $f_0$ is positive which stays away from the boundary of its support. This means that, although for finite sample size $n$ the estimator \Fpi{2}\ need not be a bivariate distribution function, ``isotonisation'' of it is not necessary asymptotically. Put differently, any common shape regularized version of our estimator is asymptotically equivalent with our estimator. However, this only holds asymptotically, and for finite sample size $n$ it might be desirable to have an estimator which is a true bivariate distribution function, satisfying condition (\ref{cond:biv_cdf}). For example when one wants to sample in a smoothed bootstrap procedure. Furthermore, we prove that \Fpi{2}\ is asymptotically normally distributed for $\beta\geq1/5$. Hence, for $\beta\in[1/5,3/10)$, the estimator \Fpi{2}\ asymptotically behaves as a distribution function with pointwise normal limiting distribution on a large subset of $[0,\infty)^2$.

\section*{Acknowledgements}
We would like to thank two anonymous referees for their valuable comments concerning the practical use of our results and the readability of the paper.

\noindent B.I. Witte, VU University Medical Center, Department of Epidemiology and Biostatistics, PO Box 7057, 1007 BM Amsterdam, The Netherlands\\
E-mail: B.Witte@vumc.nl

\appendix
\section{Technical lemmas and proofs}
\begin{lemma}\label{lem:prop_projXMd}
Assume that $F_0$ and $g$ satisfy conditions $(F.1)$ and $(G.1)$. Let \cS\ and $\M_\delta$ be as defined in Theorem \ref{th:pos_dens}. Then $proj_X\M_\delta = \big\{t:(t,z)\in\M_\delta \text{ for some } z\big\}$ is a closed subset of $\cS_{0,X}^\circ$.
\end{lemma}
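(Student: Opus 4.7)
The plan is to establish the two required properties separately: compactness (and hence closedness) of $proj_X\M_\delta$, and the inclusion $proj_X\M_\delta\subset\cS_{0,X}^\circ$. Both will follow from elementary topological arguments combined with the continuity of $f_0$ and its lower bound $2\delta$ on $\M_\delta$.

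For closedness, I will pick an open set $U$ containing $\cS$ on which $f_0$ is uniformly continuous (as provided by the standing hypotheses of Theorem \ref{th:pos_dens}). The sublevel-style set $\{(t,z)\in U:f_0(t,z)\geq 2\delta\}$ is then relatively closed in $U$, and intersecting it with the compact set $\cS\subset U$ yields a closed subset of $\cS$, which is therefore compact. Since the projection map $(t,z)\mapsto t$ is continuous, its image $proj_X\M_\delta$ is compact and hence closed in $\R$.

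For the inclusion, I will fix $t_0\in proj_X\M_\delta$ and choose $z_0$ with $(t_0,z_0)\in\M_\delta$, so that $f_0(t_0,z_0)\geq 2\delta$. By uniform continuity of $f_0$ on $U$, there is $\eta>0$ such that the closed neighborhood $[t_0-\eta,t_0+\eta]\times[z_0-\eta,z_0+\eta]$ lies in $U$ and $f_0(s,y)\geq\delta$ on it. Integrating in $y$ then gives
\[
f_{0,X}(s)\;\geq\;\int_{z_0-\eta}^{z_0+\eta}f_0(s,y)\,dy\;\geq\;2\eta\delta\;>\;0
\]
for every $s$ with $|s-t_0|\leq\eta$. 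Consequently, a whole open neighborhood of $t_0$ lies inside $\{s:f_{0,X}(s)>0\}\subset\cS_{0,X}$, which places $t_0$ in the interior of the support.

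I do not anticipate any serious obstacle; the only subtlety is separating the role played by the compact set $\cS$, which gives compactness of $\M_\delta$, from the role played by the strict lower bound $f_0\geq 2\delta$, which via continuity forces $f_{0,X}$ to be strictly positive on a \emph{two-sided} neighborhood of $t_0$ and thereby places $t_0$ genuinely in $\cS_{0,X}^\circ$ rather than merely on the support.
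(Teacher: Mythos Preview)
Your proposal is correct and follows essentially the same route as the paper: show $\M_\delta$ is closed (hence compact) as the intersection of the compact $\cS$ with the relatively closed superlevel set of the continuous $f_0$, then deduce closedness of the projection, and finally use uniform continuity of $f_0$ near a point of $\M_\delta$ to force $f_{0,X}$ strictly positive on a two-sided neighborhood. Your use of ``continuous image of a compact set is compact'' for the projection is slightly more direct than the paper's sequential/contradiction argument, but the ideas coincide.
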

\begin{proof}
Fix $\delta>0$. To prove that $proj_X\M_\delta$ is a closed subset of $\cS_{0,X}^\circ$ we prove that
\begin{enumerate}
\item[$(i)$] $\M_\delta$ is closed in $[0,\infty)^2$,
\item[$(ii)$] $proj_X\M_\delta$ is closed in $[0,\infty)$,
\item[$(iii)$] $proj_X\M_\delta$ is a subset of $\cS_{0,X}^\circ$.
\end{enumerate}
We now start with proving $(i)$. By definition of \cS, there exists an open set $\U\supset\cS$ on which $f_0$ in uniformly continuous. Define
\be \U_\delta = \big\{ (t,z)\in\U\ :\ f_0(t,z)\geq 2\delta\} = f_0^{-1}\big[[2\delta,\infty)\big].\ee
The function $f_0$ is continuous on \U, hence $\U_\delta$ is closed. Since we also have that
\be \M_\delta = \big\{ (t,z)\in[0,\infty)^2\ :\ f_0(t,z)\geq 2\delta\} \cap\cS = \U_\delta \cap\cS,\ee
$\M_\delta$ is the intersection of two closed sets, hence closed itself.

For proving $(ii)$, assume $proj_X \M_\delta$ is not closed. Then, there exists a sequence $(x_n)_n \in proj_X \M_\delta$ with $x_n \to x\notin proj_X\M_\delta$. By $(i)$, the set $\M_\delta$ is closed, hence by definition of $proj_X\M_\delta$ there exists a sequence  $(x_n,y_n)_n \in\M_\delta$. By compactness of $\M_\delta$ (this follows from $(i)$), there exists a subsequence $(n_k)_k$ and $(x,y)\in\M_\delta$ such that $(x_{n_k},y_{n_k})_k \to (x,y)$. From this it follows that $x\in proj_X\M_\delta$. This yields a contradiction, hence $proj_X\M_\delta$ is closed.

To prove $(iii)$, first note that by uniform continuity of $f_0$ on $\M_\delta$
\be \exists\ \eta>0 \mbox{ such that } \forall\ (t,z),(s,y)\in\M_\delta \ :\ \|(t,z)-(s,y)\|<\eta \Longrightarrow |f_0(t,z)-f_0(s,y)|<\delta.\ee
Now take $t\in proj_X\M_\delta$. Then for all $s$ in a small neighborhood of $t$ and $z_s>0$ such that $(s,z_s)\in\M_\delta$
\be f_{0,X}(s) = \int_0^\infty f_0(s,z)\,dz \geq \int_{[z_s,z_s+\eta/2]}f_0(s,z) \geq \delta\eta/2,\ee
hence $t\in\cS_{0,X}^\circ$.
\end{proof}

\begin{lemma}\label{lem:bounds_g}
Assume that $F_0$ and $g$ satisfy conditions $(F.1)$ and $(G.1)$. Let \cS\ and $\M_\delta$ be as defined in Theorem \ref{th:pos_dens}. Then 
\bef\label{bounds_g}
\exists\ l_g>0, u_g<\infty \ :\ l_g\leq g(t)\leq u_g \mbox{ for all $t\in proj_X \M_\delta$}.
\eef
\end{lemma}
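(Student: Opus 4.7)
The plan is to combine Lemma \ref{lem:prop_projXMd} with standard compactness arguments and condition $(G.1)$. First, I would observe that $\M_\delta$ is compact: by Lemma \ref{lem:prop_projXMd} (specifically step $(i)$ in its proof) $\M_\delta$ is closed in $[0,\infty)^2$, and it is bounded because it is contained in the compact set $\cS$. Hence its continuous image $proj_X\M_\delta$ under the projection $(t,z)\mapsto t$ is a compact subset of $\R$.

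Next, by Lemma \ref{lem:prop_projXMd} this compact set is contained in $\cS_{0,X}^\circ$, and by condition $(G.1)$ the density $g$ satisfies $0<g<\infty$ on $\cS_{0,X}^\circ$ and has a uniformly continuous (in particular, continuous) derivative $g'$ there, so $g$ itself is continuous on $\cS_{0,X}^\circ$. A continuous function on a compact set attains its minimum and maximum, so there exist $t_*,t^*\in proj_X\M_\delta$ with
\be
l_g := g(t_*) = \inf_{t\in proj_X\M_\delta} g(t), \qquad u_g := g(t^*) = \sup_{t\in proj_X\M_\delta} g(t).
\ee
Since $t_*,t^*\in\cS_{0,X}^\circ$, condition $(G.1)$ gives $0<l_g$ and $u_g<\infty$, which is exactly (\ref{bounds_g}).

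There is no real obstacle here; the argument is purely topological once Lemma \ref{lem:prop_projXMd} is in hand. The only subtle point is making sure one really invokes both halves of Lemma \ref{lem:prop_projXMd}: closedness of $proj_X\M_\delta$ together with the containment in $\cS_{0,X}^\circ$, so that the positivity and finiteness guaranteed by $(G.1)$ can be upgraded to uniform bounds via compactness.
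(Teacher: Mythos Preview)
Your argument is correct and essentially the same as the paper's: both rely on Lemma \ref{lem:prop_projXMd} to place $proj_X\M_\delta$ inside $\cS_{0,X}^\circ$ and then exploit compactness together with continuity of $g$ and $(G.1)$ to obtain the bounds. The only cosmetic difference is that the paper argues the lower bound by contradiction (taking a sequence $t_n$ with $g(t_n)\to0$ and using closedness to get a limit $t$ with $g(t)=0$), whereas you invoke the extreme value theorem directly; these are equivalent.
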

\begin{proof}
The set $proj_X\M_\delta$ is a closed subset of $\cS_{0,X}^\circ$ by Lemma \ref{lem:prop_projXMd}. On $\cS_{0,X}^\circ$, we have that $0<g<\infty$, hence also on $proj_X\M_\delta$. Now assume (\ref{bounds_g}) does not hold. Then, there exists a sequence $(t_n)_n\to t \in proj_X\M_\delta$ such that $g(t_n)\to0$. By the uniform continuity of $g$, this implies that $g(t)=0$, yielding a contradiction.

The existence of $u_g$ follows immediately from $(G.1)$.
\end{proof}

\begin{lemma}\label{lem:unif_cons_hn2_gn}
Assume $F_0$ and $g$ satisfy conditions $(F.1)$ and $(G.1)$. Also assume $k$ and $\tilde k$ satisfy conditions $(K.2)$ and $(K.3)$. In addition, assume $k'$ and $\partial_1\tilde k$ are uniformly continuous. Furthermore, let \an\ and \bn\ satisfy condition $(C.2)$. Let $\cS\subset[0,\infty)^2$ be compact and such that $f_0$ is uniformly continuous on an open subset that contains \cS\ and define $\norm{f}{\cS,\infty} = \sup_{(x,y)\in\cS}|f(x,y)|$ and $\cS_X = proj_X \cS$. Then
\bef
&&\norm{\hat g_n - g}{\cS_X,\infty} \Pconv 0,\ \ \norm{\hat g_n' - g'}{\cS_X,\infty} \Pconv 0 \label{unif_cons_gn}\\
&&\norm{\hat h_{n,1}^{(2)} - h_1}{\cS,\infty} \Pconv 0,\ \ \norm{\partial_1\hat h_{n,1}^{(2)} - \partial_1h_1}{\cS,\infty} \Pconv 0 \label{unif_cons_hn2}
\eef
\end{lemma}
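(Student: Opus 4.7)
The plan is to decompose each of the four error expressions into a deterministic bias and a mean-zero stochastic term, and to prove that each vanishes uniformly on the relevant compact set. I would do this in three steps: (i) handle the biases by change of variables and integration by parts, (ii) handle the stochastic parts either by citing classical uniform consistency theorems or by a covering plus Bernstein argument, (iii) combine by the triangle inequality.

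For the bias, a change of variables gives
\[ \E \hat g_n(t) = \int k(u)\, g(t - \an u) \, du, \]
and an integration by parts --- legal because the added assumption that $k'$ is uniformly continuous together with $(K.2)$ makes $k$ a $C^1$ function with compact support --- gives
\[ \E \hat g_n'(t) = \int k(u)\, g'(t - \an u) \, du. \]
Under $(G.1)$, both $g$ and $g'$ are uniformly continuous on a neighborhood of $\cS_X \subset \cS_{0,X}^\circ$, and $k$ has compact support $[-1,1]$, so $\sup_{t \in \cS_X} |\E \hat g_n(t) - g(t)|$ and $\sup_{t \in \cS_X} |\E \hat g_n'(t) - g'(t)|$ both tend to $0$ as $\an \to 0$. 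The same argument in the plane, invoking $(K.3)$ and the uniform continuity of $h_1(t,z) = g(t) \int_0^t f_0(s,z)\,ds$ and of $\partial_1 h_1(t,z) = g'(t) \int_0^t f_0(s,z)\,ds + g(t) f_0(t,z)$ on an open neighborhood of $\cS$, handles the two bivariate bias terms.

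For the centered stochastic parts, I would appeal to classical uniform consistency results for kernel density estimators and their partial derivatives on compact sets --- Silverman (1978) in the univariate case for $\hat g_n$ and $\hat g_n'$, and Mokkadem, Pelletier, and Worms (2005) in the bivariate case for $\hat h_{n,1}^{(2)}$ and $\partial_1 \hat h_{n,1}^{(2)}$. These deliver uniform rates of orders $O_p(1/\sqrt{n\an})$, $O_p(1/\sqrt{n\an^3})$, $O_p(1/\sqrt{n\an\bn})$, and $O_p(1/\sqrt{n\an^3\bn})$ respectively (up to logarithmic factors), each of which is $o_p(1)$ under $(C.2)$: the two bivariate rates use $n\an^2\bn^2 \to \infty$ and $n\an^3\bn \to \infty$ directly, and the univariate derivative rate uses the consequence $n\an^3 \to \infty$ of $(C.2)$ (since $n\an^3\bn \to \infty$ together with $\bn \to 0$).

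A self-contained alternative would discretise $\cS_X$ or $\cS$ on a mesh adapted to the Lipschitz scale of the relevant kernel and combine a pointwise Bernstein bound with a union bound; the summands are bounded by $\|k\|_\infty/\an$, $\|k'\|_\infty/\an^2$, $\|\tilde k\|_\infty/(\an\bn)$, and $\|\partial_1\tilde k\|_\infty/(\an^2\bn)$, and Lipschitz in the evaluation point with one extra power of $\an^{-1}$ (using uniform continuity of $k'$ and $\partial_1\tilde k$ on their compact supports). The main obstacle is the derivative term $\partial_1 \hat h_{n,1}^{(2)}$, whose summands scale like $(\an^2\bn)^{-1}$ and variance like $(n\an^3\bn)^{-1}$; here a mesh of width roughly $\an^2 \times \an\bn$ is required and the bandwidth condition $n\an^3\bn \to \infty$ from $(C.2)$ is precisely what makes the Bernstein-plus-union-bound calculation give an $o_p(1)$ remainder.
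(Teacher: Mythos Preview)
Your plan is essentially the paper's own: it too cites Silverman (1978) for $\hat g_n$ and $\hat g_n'$, and Mokkadem, Pelletier, and Worms (2005) for $\partial_1\hat h_{n,1}^{(2)}$ (there via a large-deviations bound rather than a rate statement); for the level term $\hat h_{n,1}^{(2)}$ it cites Cacoullos (1964) instead, but that is a cosmetic difference.

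There is one genuine point you have not addressed. The theorems of Cacoullos and of Mokkadem et al.\ are stated for kernel estimators built from an i.i.d.\ sample drawn from the target density, whereas $\hat h_{n,1}^{(2)}(t,z)=n^{-1}\sum_{i=1}^n\Delta_i\,\tilde k_{\an,\bn}(t-T_i,z-Z_i)$ involves the indicators $\Delta_i$ and estimates a \emph{sub}-density $h_1$ that does not integrate to one. The paper deals with this explicitly by conditioning on $N_1=\sum_i\Delta_i\sim\mathrm{Bin}(n,p)$ with $p=P(\Delta=1)$: writing $\hat h_{n,1}^{(2)}=(N_1/n)\cdot N_1^{-1}\sum_{i:\Delta_i=1}\tilde k_{\an,\bn}(t-T_i,z-Z_i)$, the inner average is a bona fide kernel density estimator of $h_1/p$ based on $N_1$ i.i.d.\ observations, to which the cited theorems apply, and $N_1/n\to p$ a.s.\ transports the uniform consistency to $\hat h_{n,1}^{(2)}$. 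If you go the citation route you should insert this reduction; your self-contained Bernstein-plus-covering alternative would sidestep the issue entirely, since it works directly with the bounded summands $\Delta_i\,\tilde k_{\an,\bn}(\cdot)$.
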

\begin{proof}
The results in (\ref{unif_cons_gn}) follow directly from Theorem A and C in \citeN{silverman:78}. The first result in (\ref{unif_cons_hn2}) follows from Theorem 3.3 in \citeN{cacoullos:64}. By Theorem 3 in \shortciteN{mokkadem:05},
\be \lim_{n\to\infty}\big(n\an^3\bn\big)^{-1}\log P\big(\norm{\partial_1\hat h_n - \partial_1\hf}{\cS,\infty}\geq\delta\big) = -c,\ee
for some constant $c>0$ only depending on $\delta$ and $\partial_1\hf$. Hence, for $n$ sufficiently large
\be P\big(\norm{\partial_1\hat h_n - \partial_1\hf}{\cS,\infty}\geq\delta\big) \leq 2e^{-n\an^3\bn c} \conv 0.\ee

The results in \citeN{cacoullos:64} and \shortciteN{mokkadem:05} hold for density estimators, whereas the estimator $\hat h_{n,1}^{(2)}$ is a sub-density. However, the results in (\ref{unif_cons_hn2}) follow from these after defining a binomially distributed sample size $N_1$ and reason similarly as the proof of (A.3) in \shortciteN{witte:10}.
\end{proof}

\begin{proof2}{Theorem \ref{th:as_Fn1}}
Define
\be Y_i = \left(\begin{array}{c}Y_{i;1}\\Y_{i;2}\end{array}\right) = n^{-3/5}\left(\begin{array}{c}
k_{\an}\big(t_0-T_i\big)\\
1_{[0,z_0]}(Z_i)\Delta_ik_{\an}\big(t_0-T_i\big)\\
\end{array}\right).\ee
By the assumptions on $F_0$ and $g$ and condition $(K.2)$, we have
\be &&\E\,Y_{i;1} = n^{-3/5}g(t_0) + \frac12n^{-1}c^2m_2(k)g''(t_0) + \o(n^{-1}),\\
&&\Var\,Y_{i;1} = n^{-1}c^{-1}g(t_0)\int k^2(y)\,dy + \O(n^{-6/5}),\\
&&\E\,Y_{i;2} = n^{-3/5}F_0(t_0,z_0)g(t_0) + \frac12n^{-1}c^2m_2(k)\partial_1^2\big\{F_0(t_0,z_0)g(t_0)\big\} + \o(n^{-1}),\\
&&\Var\,Y_{i;2} = n^{-1}c^{-1}F_0(t_0,z_0)g(t_0)\int k^2(y)\,dy + \O(n^{-6/5}).\ee
Furthermore we have
\be \cov\,(Y_{i;1},Y_{i;2}) = n^{-1}c^{-1}F_0(t_0,z_0)g(t_0)\int k^2(y)\,dy + \O(n^{-6/5}),\ee
so that
\be
&&\sum_{i=1}^n\E\,Y_i = n^{2/5}\left(\begin{array}{c}
g(t_0)\\
F_0(t_0,z_0)g(t_0)\\
\end{array}\right) + \left(\begin{array}{c}
\frac{1}{2}c^2m_2(k)g''(t_0)\\
\frac{1}{2}c^2m_2(k)\partial_1^2\big\{F_0(t_0,z_0)g(t_0)\big\}\\
\end{array}\right), + \o(1)\\
&&\sum_{i=1}^n\Var\,Y_i = c^{-1}g(t_0)\int k(u)^2\,du \left(\begin{array}{cc}
1 & F_0(t_0,z_0)\\
F_0(t_0,z_0) & F_0(t_0,z_0)
\end{array}\right)+\O(n^{-1/5}) = \Sigma_1 + \O(n^{-1/5}).
\ee
Here we denote by $\Var\,Y_i$ the covariance matrix of the vector $Y_i$. By the Lindeberg--Feller central limit theorem we then get
\bef\label{as_distr_vec}
\lefteqn{n^{2/5}\left(\left(\begin{array}{l} \frac{1}{n}\sum_{i=1}^nk_{\an}\big(t_0-T_i\big)\\
\frac{1}{n}\sum_{i=1}^n1_{[0,z_0]}(Z_i)\Delta_ik_{\an}\big(t_0-T_i\big)\\
\end{array}\right) - \left(\begin{array}{c} g(t_0) \\
F_0(t_0,z_0)g(t_0) \end{array}\right)\right)}\nonumber \\
&& \ \ - \frac12c^2m_2(k)\left(\begin{array}{c} g''(t_0) \\ \partial_1^2\big\{F_0(t_0,z_0)g(t_0)\big\} \end{array}\right) = \sum_{i=1}^n\big(Y_i-\E\,Y_i\big) + \o(1) \Dconv \N(0,\Sigma_1).
\eef

For the pointwise asymptotic result of \Fpi{1}, note that 
\be \Fpi{1}(t_0,z_0) = \phi\left(\frac{1}{n}\sum_{i=1}^nk_{\an}\big(t_0-T_i\big),\frac{1}{n}\sum_{i=1}^n1_{[0,z_0]}(Z_i)\Delta_i k_{\an}\big(t_0-T_i\big)\right),\ee
for $\phi(u,v) = v/u$. Now applying the Delta-method to (\ref{as_distr_vec}) gives
\be n^{2/5}\big(\Fpi{1}(t_0,z_0) - F_0(t_0,z_0)\big) \Dconv \N(\mu_1,\sigma^2) \ee
where $\mu_1$ and $\sigma^2$ are defined in (\ref{def:mu_Fn1}) and (\ref{def:s_Fn1}).
\end{proof2}

\begin{proof2}{Theorem \ref{th:as_equiv_Fn1Fn2}}
For $i=1,2$, let $N_n^{(i)}(t_0,z_0)$ be the numerator in the definitions (\ref{def:Fn_one}) and (\ref{def:Fn_two}) of $\Fpi{i}(t_0,z_0)$ at a fixed point $(t_0,z_0)$, and note that we can write
\be N_n^{(2)}(t_0,z_0) &=& \frac1n\sum_i 1_{[0,z_0-\bn]}(Z_i)\Delta_i\int_0^{z_0}\tilde k_{\an,\bn}(t_0-T_i,z-Z_i)\,dz \\
&& \ \ + \frac1n\sum_i 1_{(z_0-\bn,z_0]}(Z_i)\Delta_i\int_0^{z_0}\tilde k_{\an,\bn}(t_0-T_i,z-Z_i)\,dz\\
&& \ \ + \frac1n\sum_i 1_{(z_0,z_0+\bn]}(Z_i)\Delta_i\int_0^{z_0}\tilde k_{\an,\bn}(t_0-T_i,z-Z_i)\,dz\\
&=& \frac1n\sum_i 1_{[0,z_0-\bn]}(Z_i)\Delta_i\int_0^{z_0}\tilde k_{\an,\bn}(t_0-T_i,z-Z_i)\,dz \\
&& \ \ + \frac1n\sum_i 1_{(z_0-\bn,z_0]}(Z_i)\Delta_i\int_0^{Z_i+\bn}\tilde k_{\an,\bn}(t_0-T_i,z-Z_i)\,dz\\
&& \ \ - \frac1n\sum_i 1_{(z_0-\bn,z_0]}(Z_i)\Delta_i\int_{z_0}^{Z_i+\bn}\tilde k_{\an,\bn}(t_0-T_i,z-Z_i)\,dz\\
&& \ \ + \frac1n\sum_i 1_{(z_0,z_0+\bn]}(Z_i)\Delta_i\int_0^{z_0}\tilde k_{\an,\bn}(t_0-T_i,z-Z_i)\,dz\\
&=& \frac1n\sum_i 1_{[0,z_0]}(Z_i)\Delta_ik_{\an}(t_0-T_i,z_0) - \frac1n\sum_i 1_{(z_0-\bn,z_0]}(Z_i)\Delta_i\int_{z_0}^{Z_i+\bn}\tilde k_{\an,\bn}(t_0-T_i,z-Z_i)\,dz \\
&& \ \ + \frac1n\sum_i 1_{(z_0,z_0+\bn]}(Z_i)\Delta_i\int_{Z_i-\bn}^{z_0}\tilde k_{\an,\bn}(t_0-T_i,z-Z_i)\,dz.\ee
In the last equality we use $(K.1)$, so that 
\be R_n &=& N_n^{(2)}(t_0,z_0) - N_n^{(1)}(t_0,z_0) = \frac1n\sum_{i=1}^n 1_{(z_0,z_0+\bn]}(Z_i)\Delta_i\int_{Z_i-\bn}^{z_0}\tilde k_{\an,\bn}(t_0-T_i,z-Z_i)\,dz\\
&& \ \ - \frac1n\sum_{i=1}^n 1_{(z_0-\bn,z_0]}(Z_i)\Delta_i\int_{z_0}^{Z_i+\bn}\tilde k_{\an,\bn}(t_0-T_i,z-Z_i)\,dz =: \frac1n \sum_{i=1}^n U_i.\ee

First we consider the variance of $n^{2/5}R_n$. Observe that
\be |U_i| &\leq& 1_{(z_0,z_0+\bn]}(Z_i)\Delta_i\int_{Z_i-\bn}^{z_0}\tilde k_{\an,\bn}(t_0-T_i,z-Z_i)\,dz\\
&& \ \ + 1_{(z_0-\bn,z_0]}(Z_i)\Delta_i\int_{z_0}^{Z_i+\bn}\tilde k_{\an,\bn}(t_0-T_i,z-Z_i)\,dz\\
&\leq& 1_{(z_0,z_0+\bn]}(Z_i)\Delta_i\int_{Z_i-\bn}^{Z_i+\bn}\tilde k_{\an,\bn}(t_0-T_i,z-Z_i)\,dz\\
&& \ \ + 1_{(z_0-\bn,z_0]}(Z_i)\Delta_i\int_{Z_i-\bn}^{Z_i+\bn}\tilde k_{\an,\bn}(t_0-T_i,z-Z_i)\,dz\\
&=& 1_{(z_0-\bn,z_0+\bn]}(Z_i)\Delta_ik_{\an}(t_0-T_i) := S_i,\ee
with
\be\E\,S_i^2 = \int_u\int_v1_{(z_0-\bn,z_0+\bn]}(v)k_{\an}^2(t_0-u)h_1(u,v)\,dv\,du = \an^{-1}\bn 2h_1(t_0,z_0)\int k^2(x)\,dx + \O(\bn).\ee
Since
\be \Var\,R_n = \frac1n\Var\,U_1 = \frac1n\big\{\E\,U_1^2 - (\E\,U_1)^2\big\} \leq \frac1n\E\,S_1^2 = \O(n^{-1}\an^{-1}\bn),\ee
$\Var\,n^{2/5}R_n \conv 0$ for $\an=n^{-1/5}$ and $\bn = n^{-\beta}$ with $\beta>0$.

Now we consider the expectation of $U_i$.
\be \E\,U_i &=& \int_v\int_u 1_{(z_0,z_0+\bn]}(v)\int_{v-\bn}^{z_0}\tilde k_{\an,\bn}(t_0-u,z-v)\,dz h_1(u,v)\,du\,dv\\
&& \ \ - \int_v\int_u 1_{(z_0-\bn,z_0]}(v)\int_{z_0}^{v+\bn}\tilde k_{\an,\bn}(t_0-u,z-v)\,dz h_1(u,v)\,du\,dv\\
&=& \bn\int_{y=-1}^0\int_{x=-1}^1\int_{w=-1}^{y} \tilde k\left(x,w\right)\,dw h_1(t_0-\an x,z_0-\bn y)\,dx\,dy\\
&& \ \ - \bn\int_{y=0}^1\int_{x=-1}^1\int_{w=y}^1 \tilde k\left(x,w\right)\,dw h_1(t_0-\an x,z_0-\bn y)\,dx\,dy\\
&=& \bn h_1(t_0,z_0)\left\{\int_{x=-1}^1\int_{y=-1}^0\int_{w=-1}^{y} \tilde k(x,w)\,dw\,dy\,dx - \int_{x=-1}^1\int_{y=0}^1\int_{w=y}^1 \tilde k(x,w)\,dw\,dy\,dx\right\}\\
&& \ \ - \bn\an\partial_1h_1(t_0,z_0)\left\{\int_{x=-1}^1\int_{y=-1}^0\int_{w=-1}^{y} x\tilde k(x,w)\,dw\,dy\,dx - \int_{x=-1}^1\int_{y=0}^1\int_{w=y}^1 x\tilde k(x,w)\,dw\,dy\,dx\right\}\\
&& \ \ - \bn^2\partial_2h_1(t_0,z_0)\left\{\int_{x=-1}^1\int_{y=-1}^0\int_{w=-1}^{y} y\tilde k(x,w)\,dw\,dy\,dx - \int_{x=-1}^1\int_{y=0}^1\int_{w=y}^1 y\tilde k(x,w)\,dw\,dy\,dx\right\}\\
&& \ \ + \O\big(\bn\an^2\big) + \O\big(\bn^2\an\big) + \O\big(\bn^3\big)\\
&=& -\bn h_1(t_0,z_0)\int_{x=-1}^1\int_{w=-1}^1 w\tilde k(x,w)\,dw\,dx + \bn\an\partial_1h_1(t_0,z_0)\int_{x=-1}^1\int_{w=-1}^1 xw\tilde k(x,w)\,dw\,dx \\
&& \ \ + \frac12\bn^2\partial_2h_1(t_0,z_0)\int_{x=-1}^1\int_{w=-1}^1 w^2\tilde k(x,w)\,dw\,dx + \O\big(\bn\an^2\big) + \O\big(\bn^2\an\big) + \O\big(\bn^3\big)
\ee
where the last equality follows from changing the order of integration. By condition $(K.3)$, the first two integrals are zero and the last integral equals $m_2(\tilde k)$, so that
\be \E\,n^{2/5}R_n \conv \left\{\begin{array}{ll}
\pm\infty & \text{for } \beta < 1/5,\\
\frac12c_2^2m_2(\tilde k)g(t_0)\partial_2^2F_0(t_0,z_0) & \text{for } \beta = 1/5,\\
0 & \text{for } \beta > 1/5.
\end{array}\right.\ee
Applying Slutsky's Lemma to 
\be n^{2/5}\big(\Fpi{2}(t_0,z_0)-\Fpi{1}(t_0,z_0)\big) = \frac{n^{2/5}R_n}{\gh{n}(t_0)}\ee
gives the result.
\end{proof2}

\newpage
\begin{figure}[p]
\centering
\subfigure{
 \includegraphics[width=0.45\textwidth,height=0.45\textwidth]{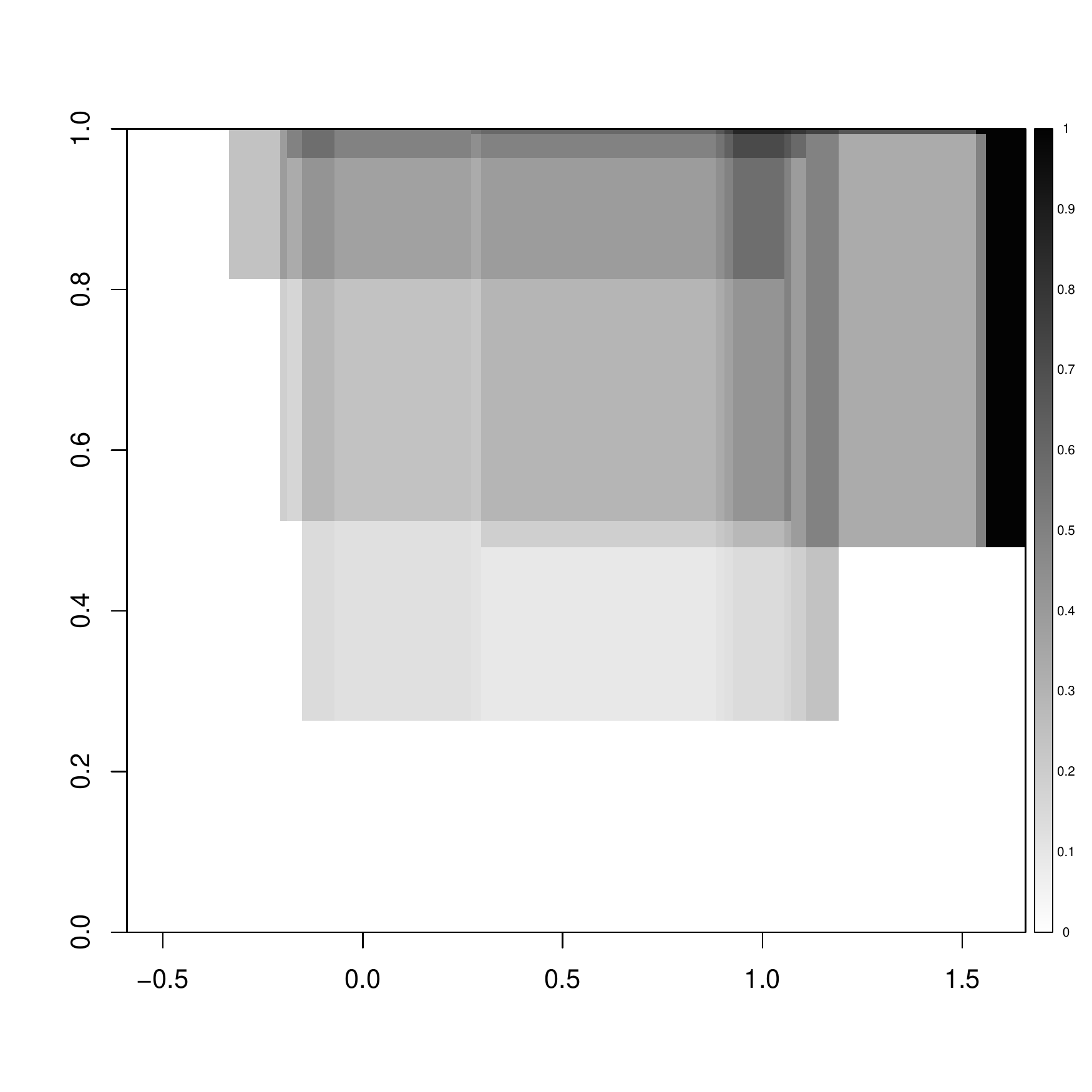} }
\subfigure{
 \includegraphics[width=0.45\textwidth,height=0.45\textwidth]{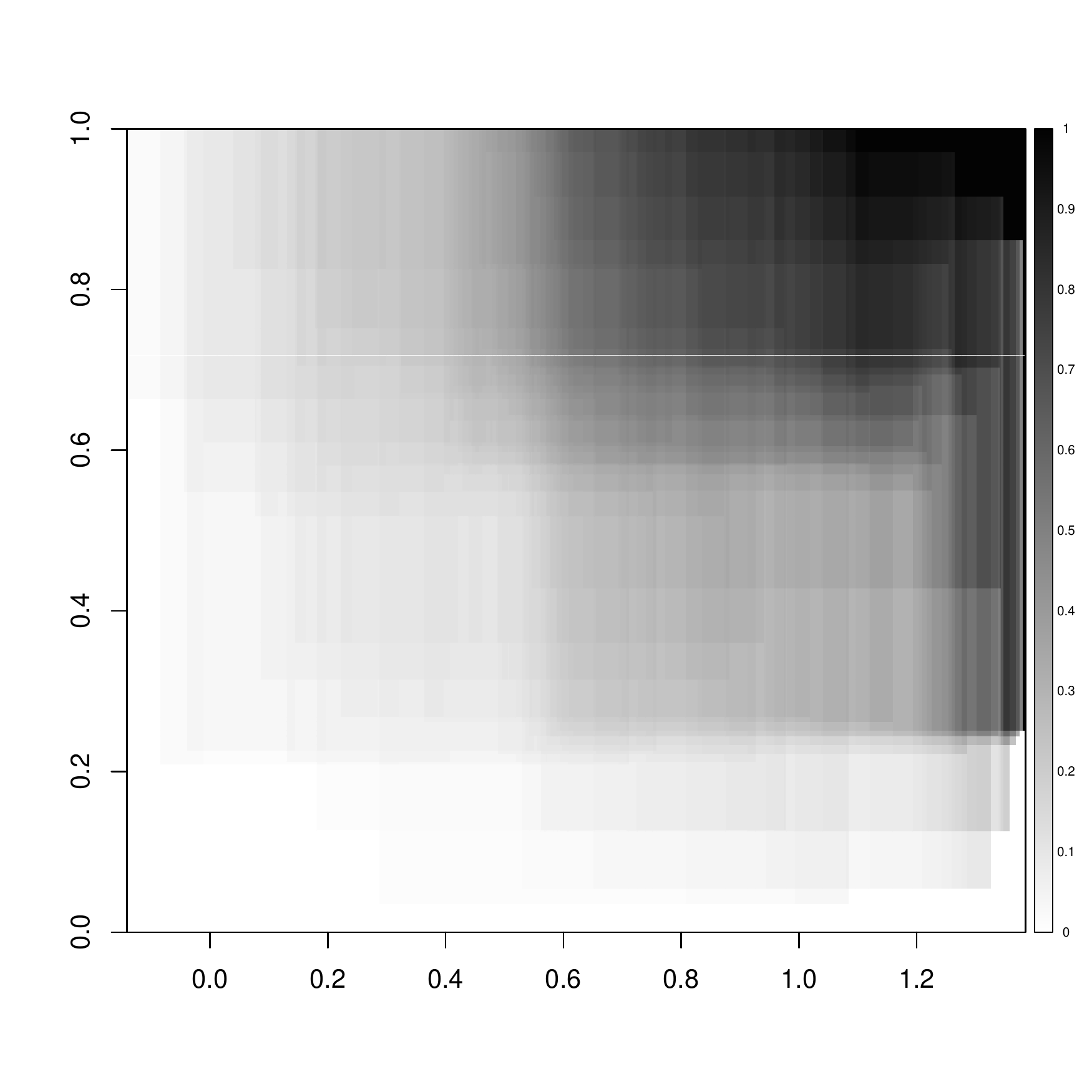} }
\caption{Two examples of the estimator \Fpi{1}\ for a sample of size $n=10$ (left panel) and of size $n=100$ (right panel), $k(x)= \frac121_{[-1,1]}(x)$, $\an=0.65$ (left panel) and $\an=0.40$ (right panel), $F_0(x,y)=xy$ on $[0,1]^2$ and $g(t)=1_{[0,1]}(t)$.}\label{fig:ill_Fn1}
\end{figure}

\begin{figure}[p]
\centering
\subfigure{
 \includegraphics[width=0.45\textwidth,height=0.45\textwidth]{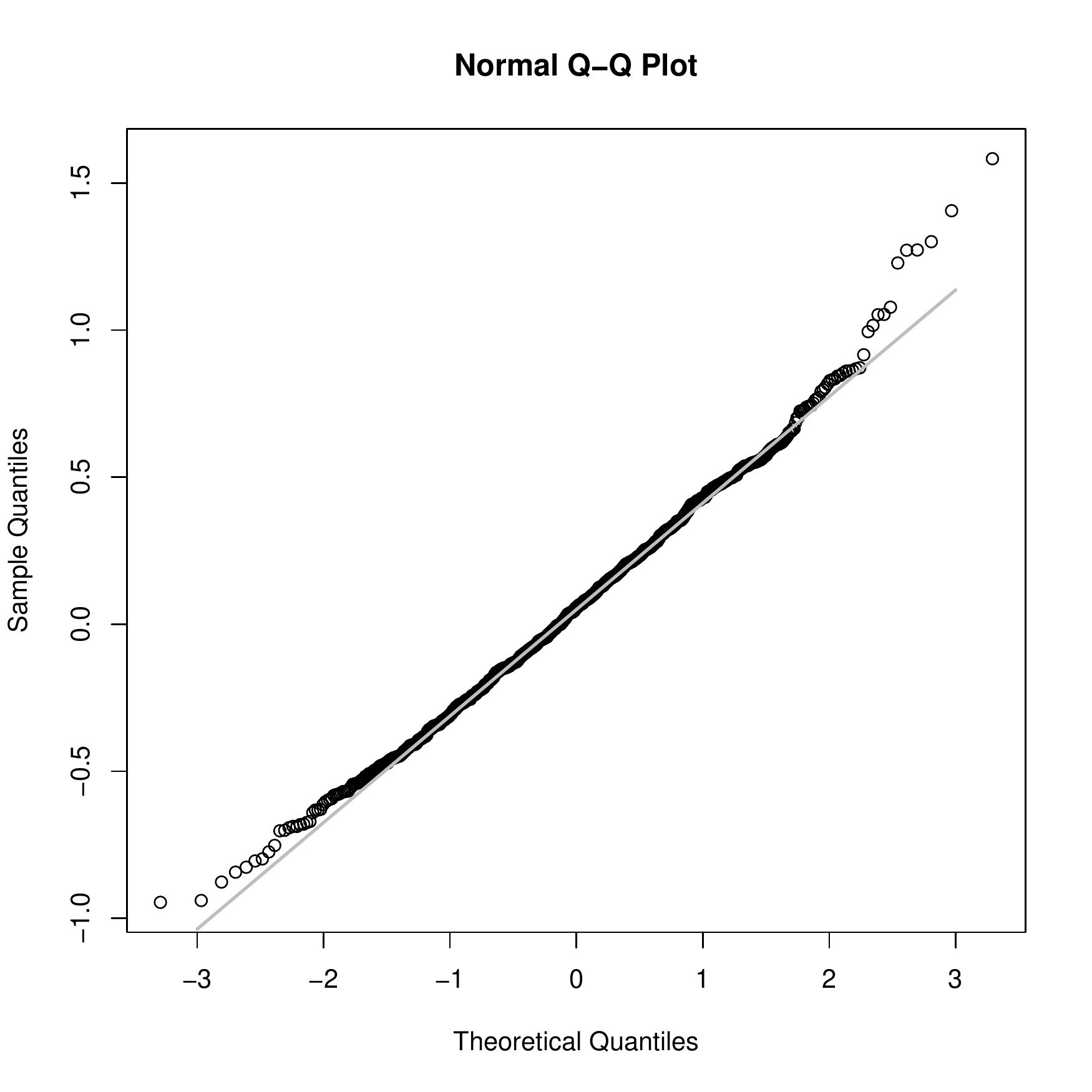} }
\subfigure{
 \includegraphics[width=0.45\textwidth,height=0.45\textwidth]{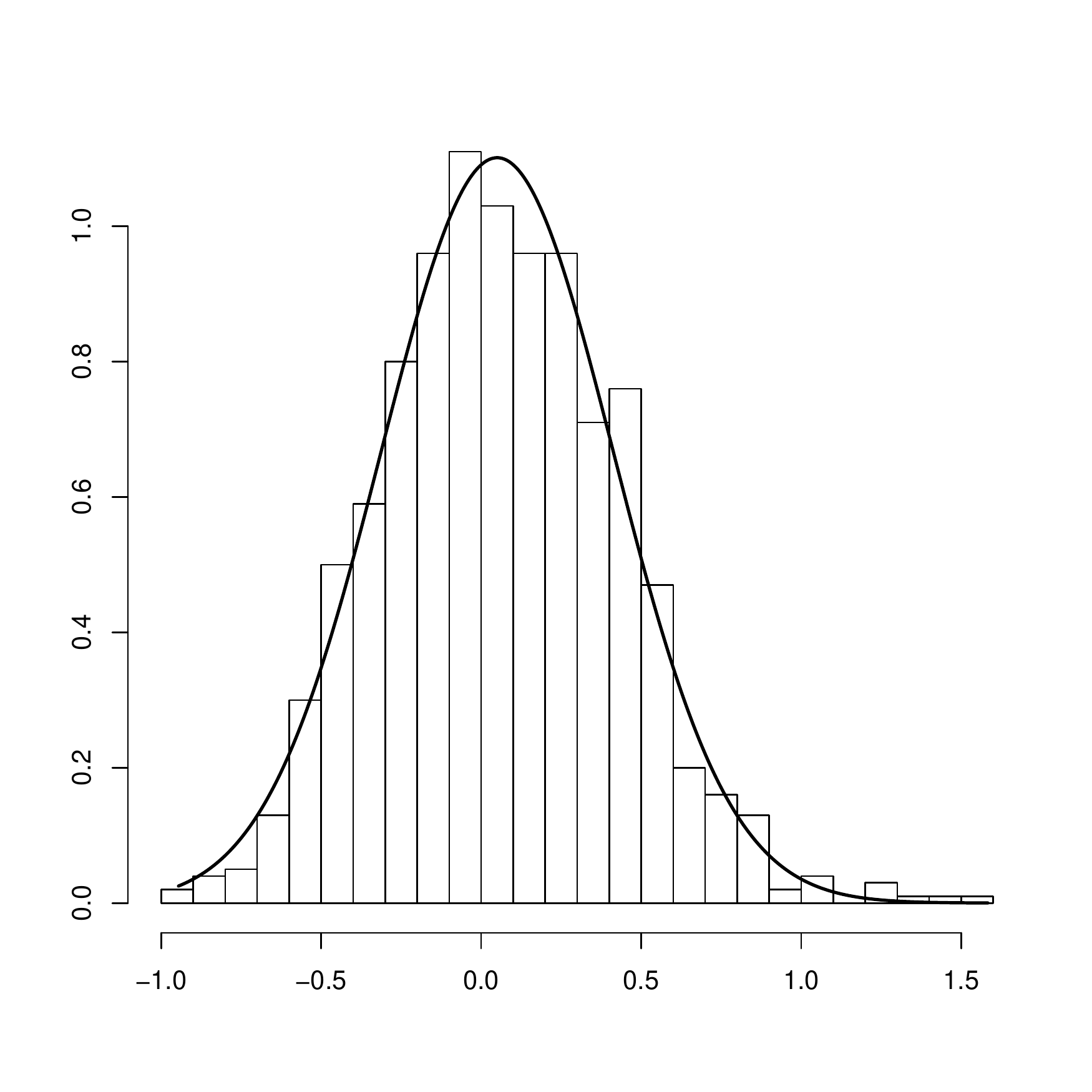} }
\caption{QQ-plot (left panel) and histogram (right panel) of $m=1\,000$ values $n^{2/5}\big(\Fpi{1}(0.5,0.5)-F_0(0.5,0.5)\big)$ for $n=5\,000$, $k(y) = \frac34(1-y^2)1_{[-1,1]}(y)$, $\an=0.09$, $F_0(x,y)=\frac12xy(x+y)$ and $g(t) = 2t$, illustrating Theorem \ref{th:as_Fn1}.}\label{fig:asymp_Fn1}
\end{figure}

\begin{figure}[p]
\centering
\includegraphics[width=0.45\textwidth,height=0.45\textwidth]{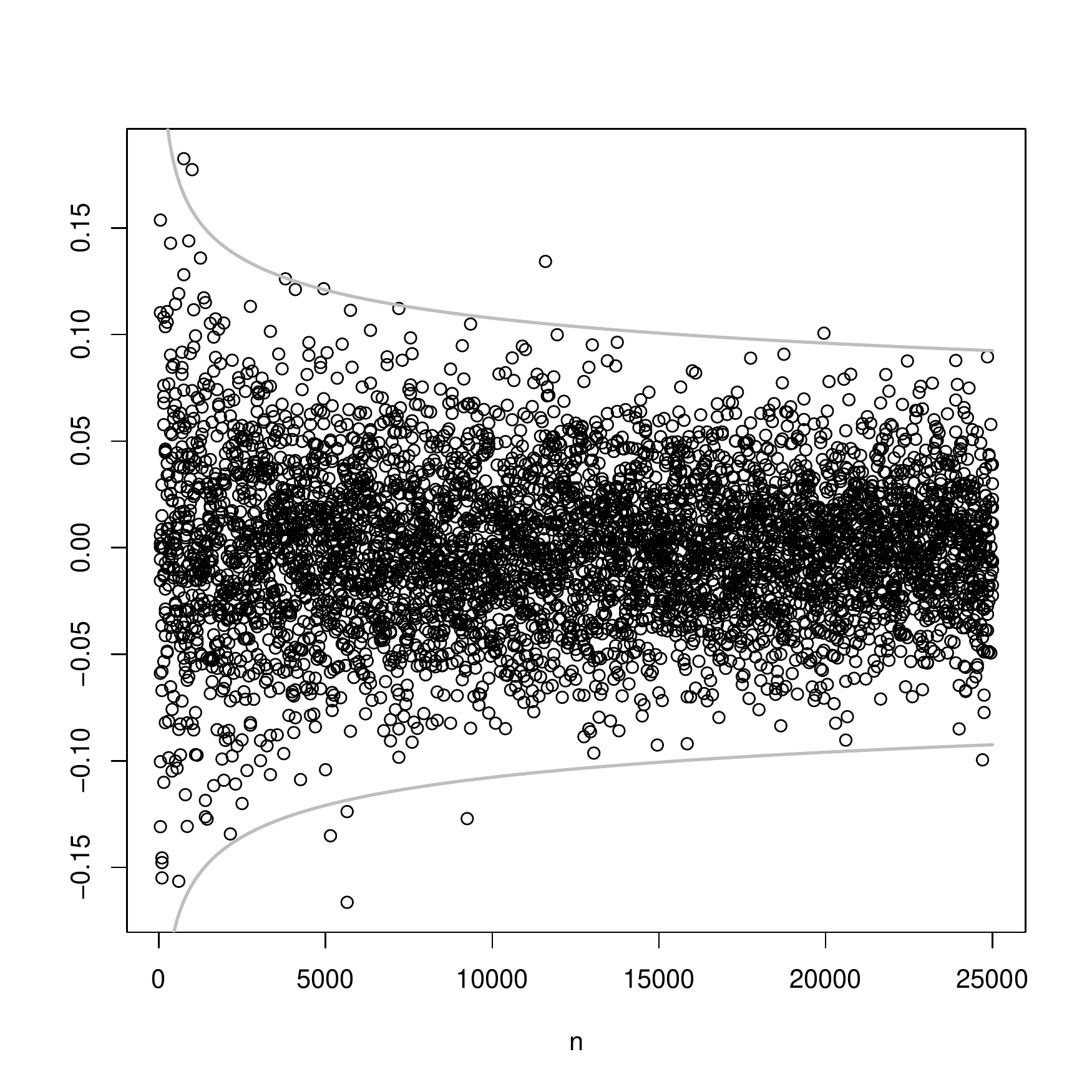}
\caption{Values of $n^{2/5}\big(\Fpi{2}(0.5,0.5)-\Fpi{1}(0.5,0.5)\big)$ as a function of $n$ for $\tilde k(x,y) = k(x)k(y)$, $k(u) = \frac34(1-u^2)$, $\an=\frac12n^{-1/5}$, $\bn = \frac12n^{-1/3}$, $F_0(x,y)=\frac12xy(x+y)$ and $g(t) = 2t$.}\label{fig:Rn}
\end{figure}

\begin{figure}[p]
\centering
\subfigure{
 \includegraphics[width=0.45\textwidth,height=0.45\textwidth]{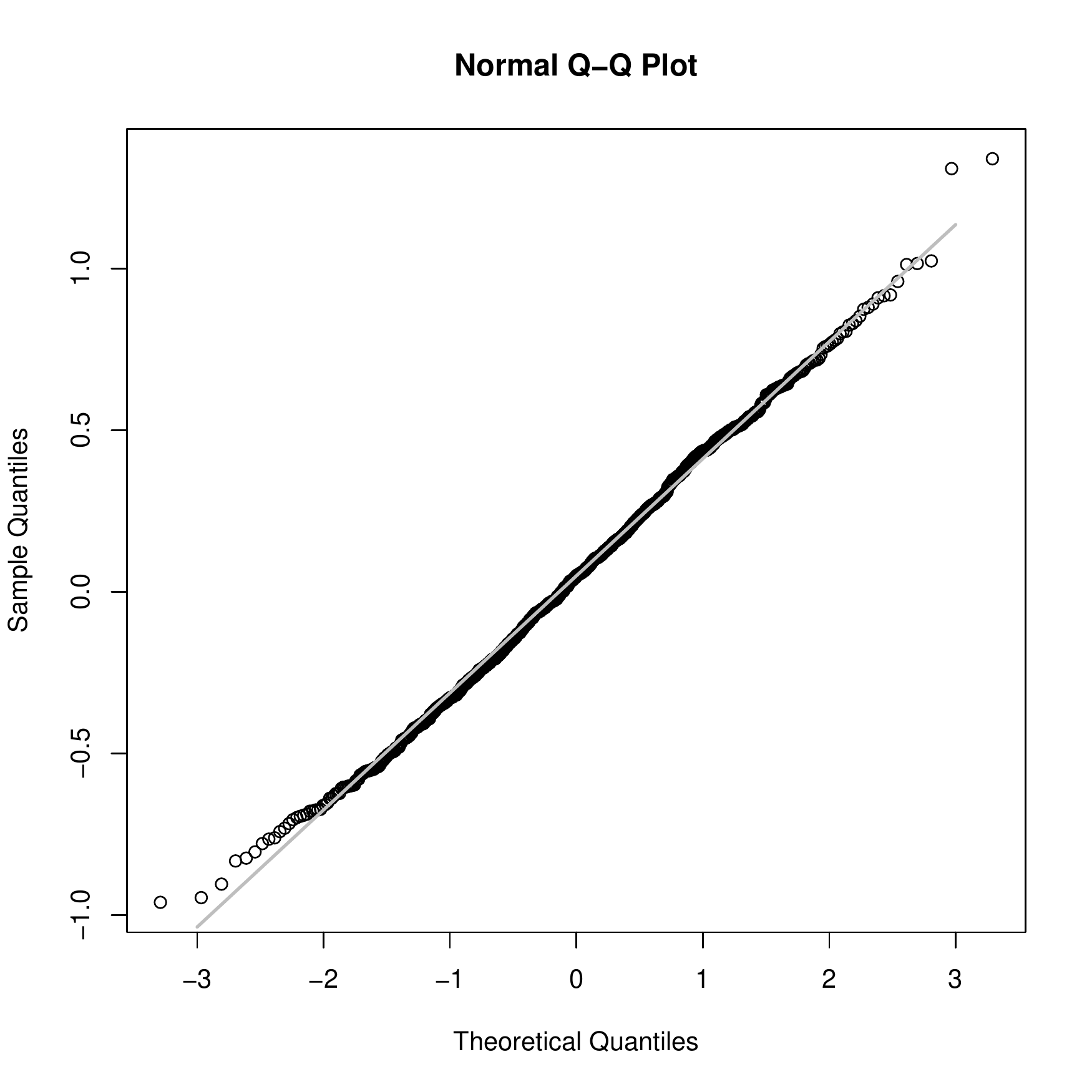} }
\subfigure{
 \includegraphics[width=0.45\textwidth,height=0.45\textwidth]{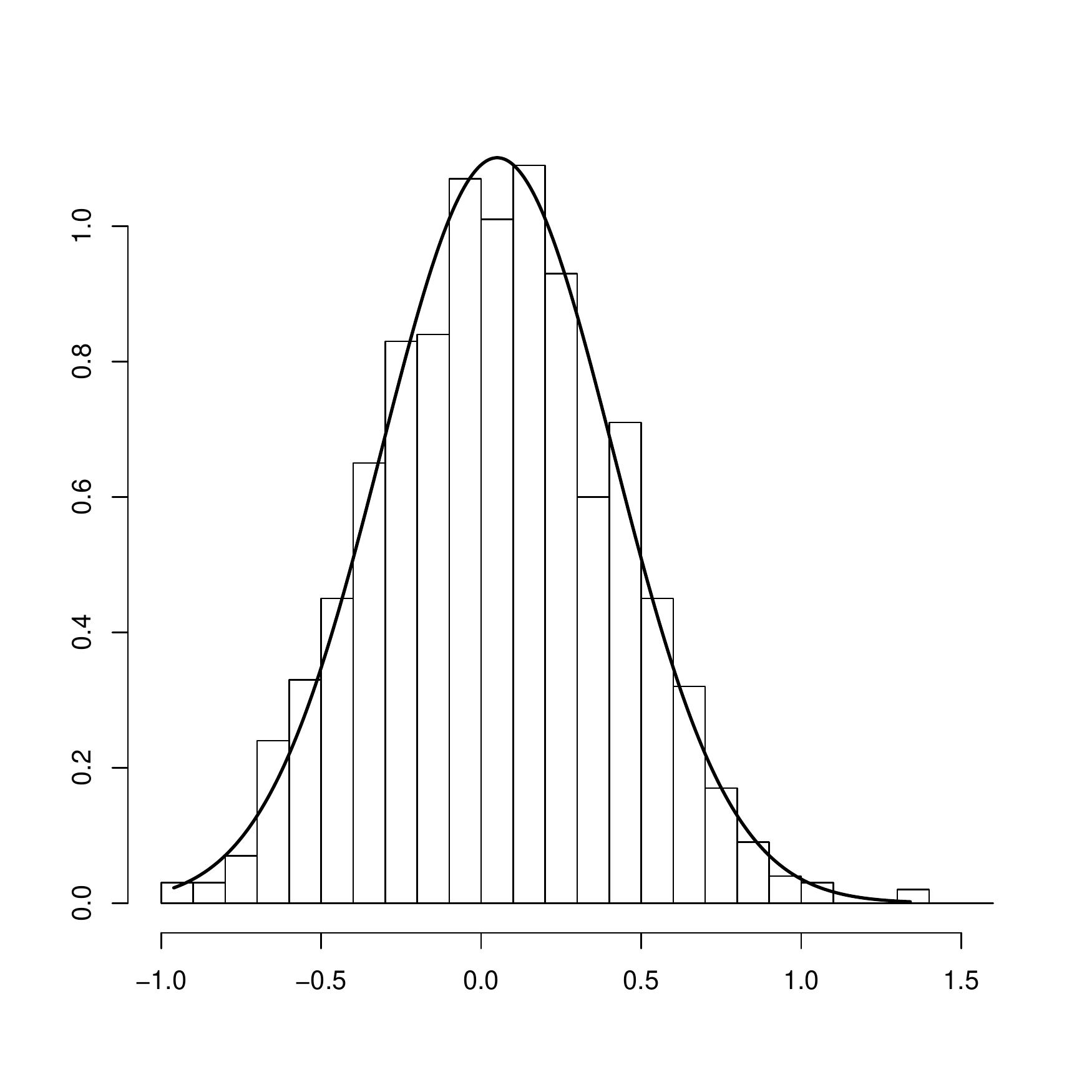} }
\caption{QQ-plot (left panel) and histogram (right panel) of $m=1\,000$ values $n^{2/5}\big(\Fpi{2}(0.5,0.5)-F_0(0.5,0.5)\big)$ for $n=5\,000$, $\tilde k(x,y)=k(x)k(y)$, $k(u) = \frac34(1-u^2)$, \an=0.091, \bn=0.029, $F_0(x,y)=\frac12xy(x+y)$ and $g(t) = 2t$, illustrating Corollary \ref{cor:as_distr_Fn2}.}\label{fig:asymp_Fn2}
\end{figure}

\begin{table}[p]
\centering
\begin{tabular*}{\textwidth}{@{\extracolsep{\fill}}rrcccc}
\hline
\rule[-0.65em]{0cm}{2em}
$(t_0,z_0)$ & $n$ & \an & $\widehat {MSE}$ (s.e.)\phantom{\mbox{\ \ \ }} & (\an,\bn) & $\widehat{MSE}$ (s.e.)\phantom{\mbox{\ \ \ }}\\
\hline\hline
\\
 & & \multicolumn{2}{c}{$\Fpi{1}$} & \multicolumn{2}{c}{$\Fpi{2}$} \\
\hline
(0.4,0.4) & 500 & 0.20 & 5.14$\cdot10^{-4}$ (5.10$\cdot10^{-5}$) & (0.20,0.25) & 4.43$\cdot10^{-4}$ (4.33$\cdot10^{-5}$)\\
 & 1\,000 & 0.20 & 3.31$\cdot10^{-4}$ (3.04$\cdot10^{-5}$) & (0.20,0.15) & 3.10$\cdot10^{-4}$ (3.06$\cdot10^{-5}$)\\
 & 5\,000 & 0.15 & 8.09$\cdot10^{-5}$ (8.47$\cdot10^{-6}$) & (0.15,0.10) & 7.74$\cdot10^{-5}$ (8.28$\cdot10^{-6}$)\\
 & 10\,000 & 0.15 & 4.50$\cdot10^{-5}$ (3.43$\cdot10^{-6}$) & (0.15,0.05) & 4.50$\cdot10^{-5}$ (3.38$\cdot10^{-6}$)\\
\\
(0.6,0.6) & 500 & 0.25 & 8.21$\cdot10^{-4}$ (7.48$\cdot10^{-5}$) & (0.25,0.15) & 7.82$\cdot10^{-4}$ (7.04$\cdot10^{-5}$)\\
 & 1\,000 & 0.20 & 5.31$\cdot10^{-4}$ (4.34$\cdot10^{-5}$) & (0.20,0.05) & 5.31$\cdot10^{-4}$ (4.33$\cdot10^{-5}$)\\
 & 5\,000 & 0.15 & 1.21$\cdot10^{-4}$ (9.98$\cdot10^{-6}$) & (0.15,0.05) & 1.21$\cdot10^{-4}$ (9.88$\cdot10^{-6}$)\\
 & 10\,000 & 0.15 & 9.21$\cdot10^{-5}$ (7.41$\cdot10^{-6}$) & (0.15,0.05) & 9.14$\cdot10^{-5}$ (7.31$\cdot10^{-6}$)\\
\\
 & & \multicolumn{2}{c}{$\tilde F_n$} & \multicolumn{2}{c}{$\FMS$} \\
\hline
(0.4,0.4) & 500 & 0.200 & 5.56$\cdot10^{-4}$ (4.56$\cdot10^{-5}$) & (0.250,0.250) & 7.21$\cdot10^{-4}$ (5.58$\cdot10^{-5}$)\\
 & 1\,000 & 0.100 & 3.26$\cdot10^{-4}$ (2.83$\cdot10^{-5}$) & (0.200,0.500) & 3.48$\cdot10^{-4}$ (3.30$\cdot10^{-5}$)\\
 & 5\,000 & 0.100 & 1.10$\cdot10^{-4}$ (9.98$\cdot10^{-6}$) & (0.200,0.333) & 7.20$\cdot10^{-5}$ (7.11$\cdot10^{-6}$)\\
 & 10\,000 & 0.067 & 6.38$\cdot10^{-5}$ (4.82$\cdot10^{-6}$) & (0.167,0.333) & 7.35$\cdot10^{-5}$ (6.45$\cdot10^{-6}$)\\
\\
(0.6,0.6) & 500 & 0.200 & 1.45$\cdot10^{-3}$ (1.35$\cdot10^{-4}$) & (0.250,0.250) & 5.51$\cdot10^{-4}$ (5.28$\cdot10^{-5}$)\\
 & 1\,00 & 0.250 & 3.59$\cdot10^{-3}$ (1.97$\cdot10^{-4}$) & (0.250,0.200) & 4.13$\cdot10^{-4}$ (3.40$\cdot10^{-5}$)\\
 & 5\,000 & 0.333 & 1.54$\cdot10^{-2}$ (2.03$\cdot10^{-4}$) & (0.250,0.167) & 2.24$\cdot10^{-4}$ (5.66$\cdot10^{-5}$)\\
 & 10\,000 & 0.333 & 1.50$\cdot10^{-2}$ (1.48$\cdot10^{-4}$) & (0.250,0.200) & 1.32$\cdot10^{-4}$ (7.23$\cdot10^{-6}$)\\

\end{tabular*}
\caption{\it Minimum values of the estimated MSE of the estimators \Fpi{1}, \Fpi{2}, $\tilde F_n$ and \FMS\ for different values of $n$ at different points $(t_0,z_0)$ for the simulation study. The values of \an\ and \bn\ that resulted in these minimal values are also given, as well as the standard errors of the mean of the squared differences between the estimator and the true value.}\label{table:sim_study}
\end{table}

\begin{figure}[p]
\centering
\subfigure[$n=500$ with $\bn = 0.15$ for \Fpi{2}]{
 \includegraphics[width=0.48\textwidth]{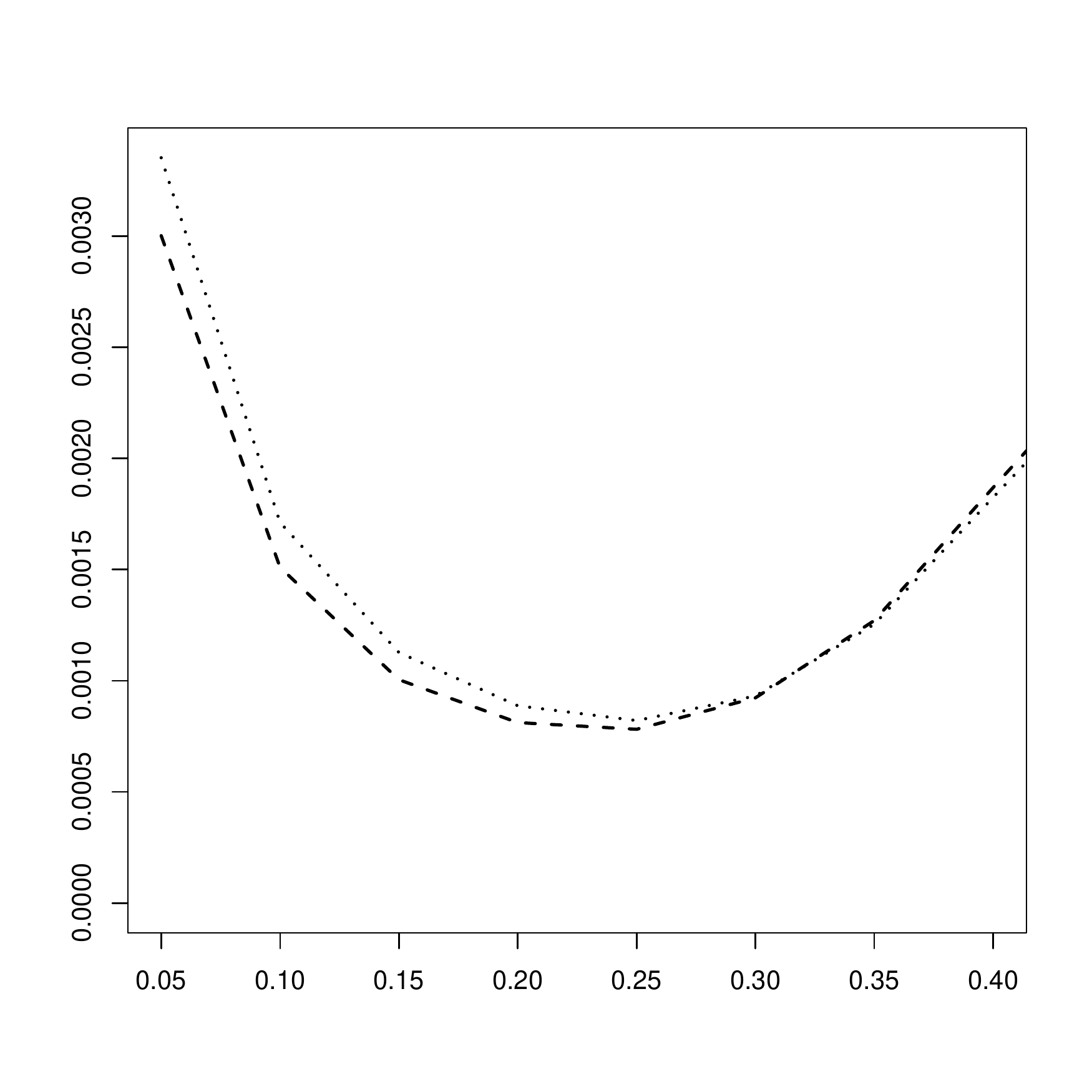}}
\subfigure[ $n=1\,000$ with $\bn = 0.05$ for \Fpi{2}]{
 \includegraphics[width=0.48\textwidth]{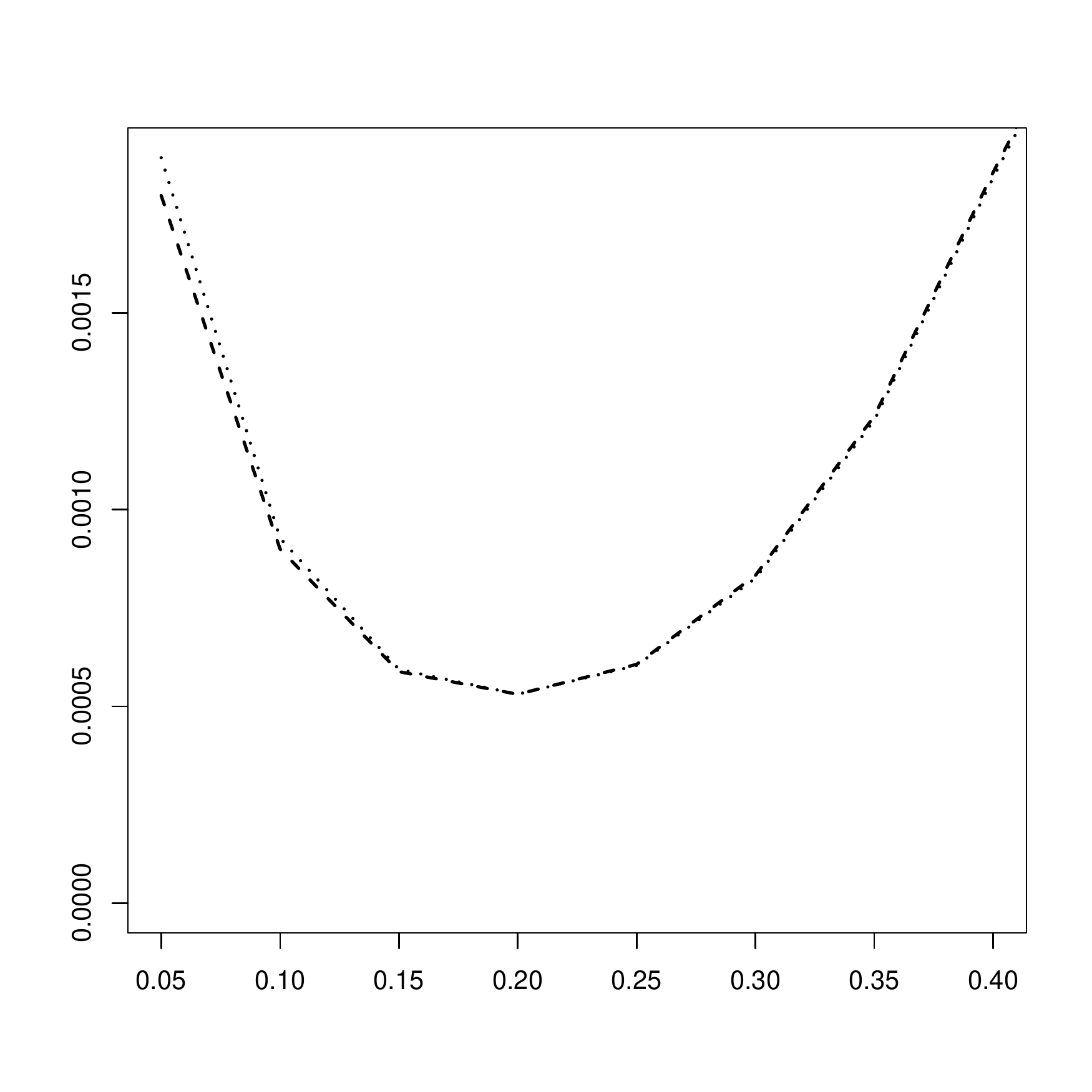}}
\subfigure[$n=5\,000$ with $\bn = 0.05$ for \Fpi{2}]{
 \includegraphics[width=0.48\textwidth]{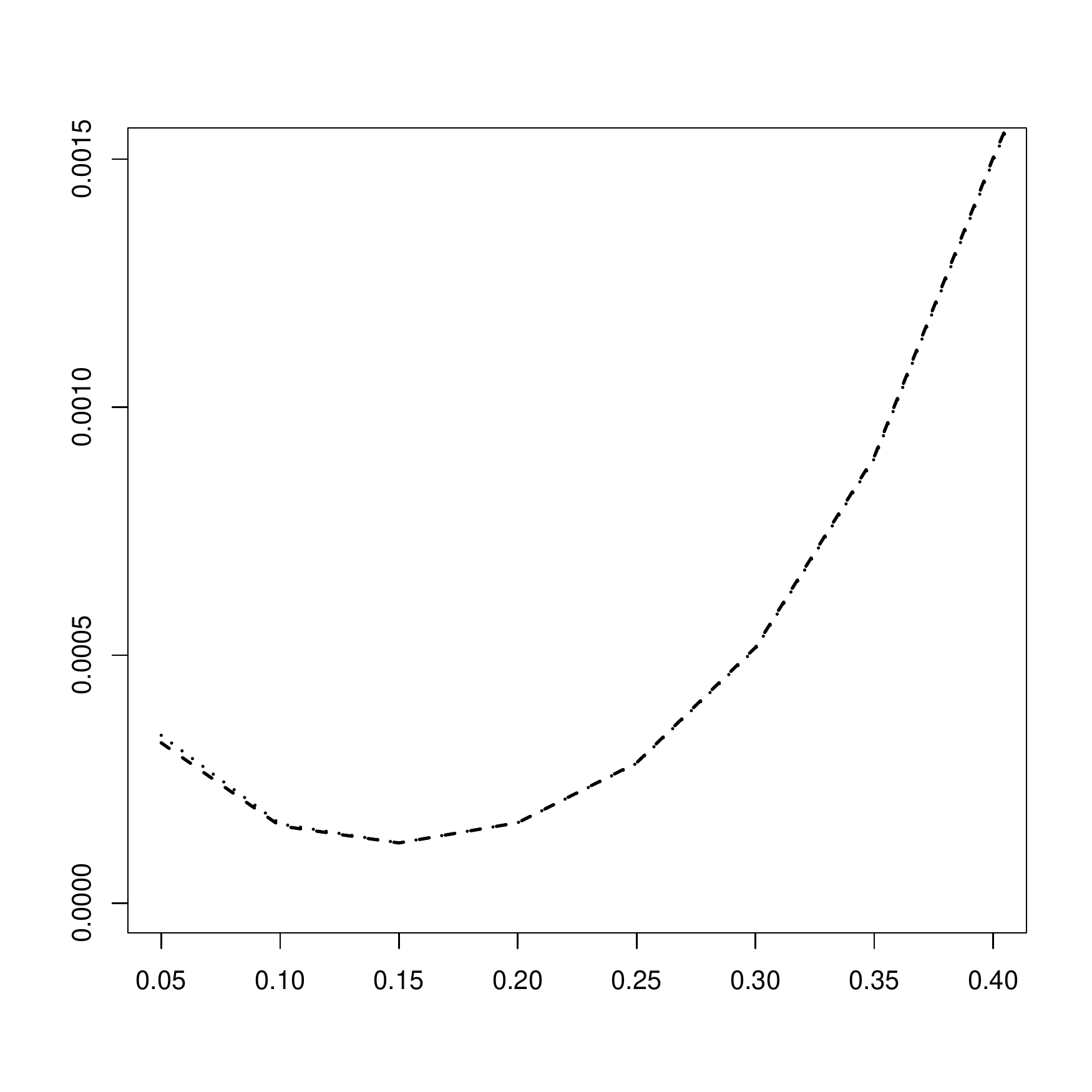}}
\subfigure[$n=10\,000$ with $\bn = 0.05$ for \Fpi{2}]{
 \includegraphics[width=0.48\textwidth]{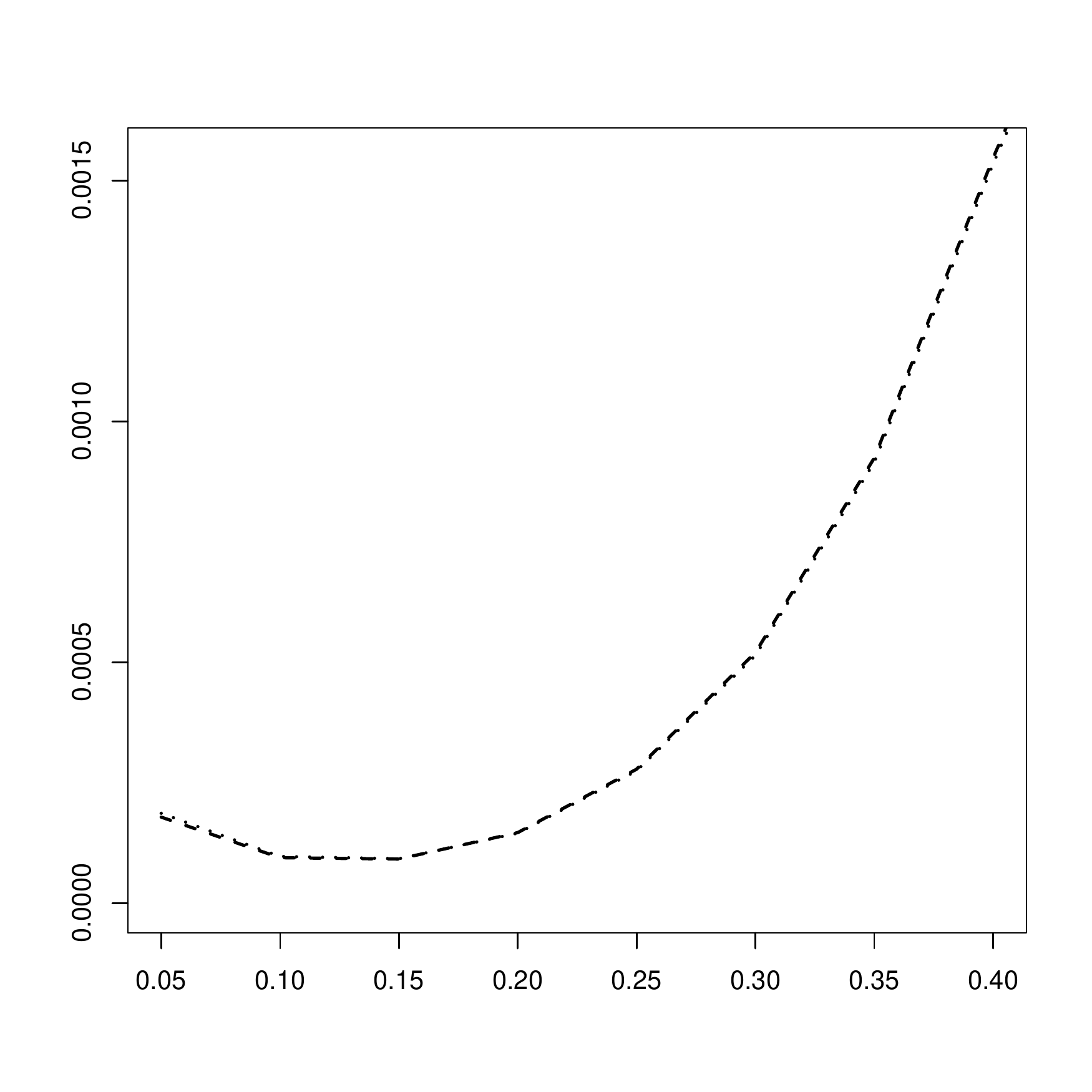}}
\caption{\it The estimated MSE as function of the smoothing parameter $\an$ the estimators \Fpi{1}\ (dotted line) and \Fpi{2}\ (dashed line) for different values of $n$ at the point $(0.6,0.6)$ for the simulation study.}\label{fig:sim_study}
\end{figure}

\begin{figure}[p]
\centering
\includegraphics[width=0.45\textwidth,height=0.45\textwidth]{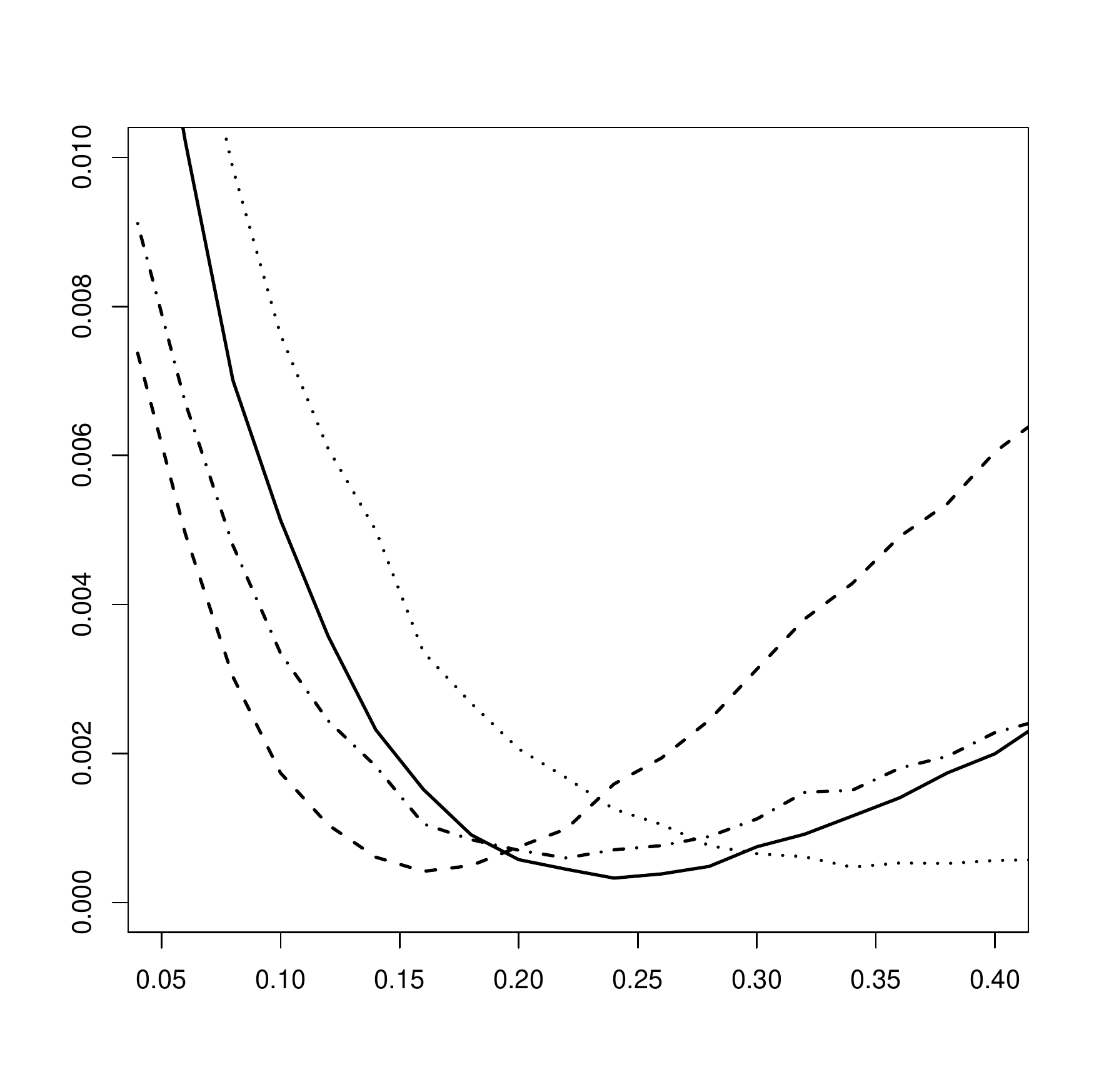}
\caption{Values the estimated MSEs $\widehat{MSE}^{(1)}(\an;0.5,0.5)$ (dotted line), $\widehat{MSE}^{(2)}(\an,0.78;0.5,0.5)$ (solid line), $\widetilde{MSE}^{(1)}(\an;0.5,0.5)$ (dash-dotted line) and $\widetilde{MSE}^{(2)}(\an,0.8;0.5,0.5)$ (dashed line) as function of \an.}\label{fig:bw_sel}
\end{figure}

\end{document}